\pgfplotsset{compat=1.15}
\newtheorem{Teo}{Theorem}[section]
\newtheorem{Prop}[Teo]{Proposition}
\newtheorem{Lema}[Teo]{Lemma}
\newtheorem{Cor}[Teo]{Corollary}
\theoremstyle{definition}
\newtheorem{Def}[Teo]{Definition}
\newtheorem{Obs}[Teo]{Remark}
\newcommand{\Q}{\mathbb{Q}}
\newcommand{\R}{\mathbb{R}}
\newcommand{\Z}{\mathbb{Z}}
\newcommand{\N}{\mathbb{N}}
\newcommand{\lra}{\longrightarrow}
\newcommand{\Lra}{\Longrightarrow}
\newcommand{\VR}{\mathcal{O}}
\newcommand{\MI}{\mathfrak{m}}
\newcommand{\SU}{\mbox{\rm supp}}
\DeclareMathOperator{\inv}{in}
\begin{document}
\title[Valuations approaching a polynomial]{Valuations on $K[x]$ approaching a fixed irreducible polynomial}
\author{Matheus dos Santos Barnab\'e}
\author{Josnei Novacoski}
\thanks{During the realization of this project the second author was supported by a grant from Funda\c c\~ao de Amparo \`a Pesquisa do Estado de S\~ao Paulo (process number 2017/17835-9).}

\begin{abstract}
For a fixed irreducible polynomial $F$ we study the set $\mathcal V_F$ of all valuations on $K[x]$ bounded by valuations whose support is $(F)$. The first main result presents a characterization for valuations in $\mathcal V_F$ in terms of their graded rings. We also present a result which gives, for a fixed $\nu\in \mathcal V_F$ and a key polynomial $Q\in{\rm KP}(\nu)$, the maximum value that augmented valuations in $\mathcal V_F$ can assume on $Q$. This value is presented explicitly in terms of the slopes of the Newton polygon of $F$ with respect to $Q$. Finally, we present some results about Artin-Schreier extensions that illustrate the applications that we have in mind for the results in this paper.
\end{abstract}

\keywords{Key polynomials, graded rings, Newton polygons}
\subjclass[2010]{Primary 13A18}

\maketitle
\section{Introduction}
Let $(K,\nu_0)$ be a valued field and $\Gamma_0$ its group of values. Fix a totally ordered divisible group $\Gamma$ containing $\Gamma_0$ and $R=K[x]$.   Let
$$
\mathcal V=\{\nu_0\}\cup\{\nu:R\lra \Gamma_\infty\mid\nu\text{ is a valuation extending }\nu_0\}.
$$
For the corresponding definitions see Section \ref{prelimi}. Consider the partial order on $\mathcal V$ given by $\nu_0\leq \nu$ for every $\nu\in \mathcal V$ and for $\nu,\mu\in\mathcal V\setminus\{\nu_0\}$, we set $\nu\leq \mu$ if and only if $\nu(f)\leq\mu(f)$ for every $f\in K[x]$.

The problem of understanding the structure of $\mathcal V$ has been extensively studied and many objects introduced in order to describe it. For instance, \textit{key polynomials} (as in \cite{Mac_1}, \cite{Mac_2}, \cite{SopivNova}, \cite{Vaq_1} and \cite{Vaq_2}, among others) are very useful objects. From this theory, it follows that for every $\nu\in\mathcal V$, there exists a well-ordered set  $\{\nu_i\}_{i\in I}\subseteq \mathcal V$, with $\nu_0$ its smallest element and each $\nu_i$ constructed iteratively, such that for every $f\in K[x]$ there exists $i\in I$ for which
\[
\nu(f)=\nu_j(f)\mbox{ for every }j\in I\mbox{ with }i\leq j.
\]
This means that $\nu$ can be seen as the ``limit of the sequence $\{\nu_i\}_{i\in I}$''. Moreover, each of these valuations $\nu_i$ has special properties, for instance they are \textit{valuation-transcendental} (see Theorem 2.3 of \cite{Nart1} and Theorem 1.1 of \cite{Caio}).

Another type of objects used to understand $\mathcal V$ are \textit{pseudo-convergent sequences} as introduced by Kaplansky in \cite{Kapl}. It follows from Theorems 2 and 3 of \cite{Kapl} that if $K$ is algebraically closed, then for every $\nu\in\mathcal V$, either $\nu$ is \textit{valuation-transcendental} or there exists a \textit{pseudo-convergent sequence} $\{a_i\}_{i\in I}$ in $K$ such that for every $f\in K[x]$ there exists $i\in I$ such that $\nu(f)=\nu_0(f(a_j))$ for every $j\in I$ with $j\geq i$. More recently, Peruginelli and Spirito show (see \cite{Giul_2} and \cite{Giul_1}) that if we allow a more general type of sequences (called \textit{pseudo-monotone}), then it is enough that the completion of $K$, with respect to $\nu_0$, is algebraically closed. Also, the concept of minimal pairs (as defined in \cite{Kand}) can be used in a similar way. The comparison between these different objects can be found in \cite{NovaMP} and \cite{Nov1}.

It is important to understand the maximal elements of $\mathcal V$. For instance, it follows from Theorem 4.4 of \cite{Nart1} that if the \textit{graded ring} of $\nu$ is algebraic over the graded ring of $\nu_0$, then $\nu$ is maximal in $\mathcal V$. Also, Theorem 1.2 of \cite{Caio} tells us that this happens if and only if $\nu$ is strictly greater than its \textit{truncation} on any $q\in K[x]$. Another type of elements in $\mathcal V$ that are maximal are the \textit{non-Krull valuations}. In this case, we have
\[
\SU(\nu):=\{g\in K[x]\mid \nu(g)=\infty\}=(F)
\]
for some irreducible polynomial $F\in K[x]$.

For a fixed monic and irreducible polynomial $F\in K[x]$ we set
\[
\mathcal{V}_{F_\infty}=\{\mu\in \mathcal V\mid \SU(\mu)=(F)\}
\]
and
\[
\mathcal V_F:=\{\nu\in \mathcal V\mid \nu\leq \mu \mbox{ for some }\mu\in \mathcal{V}_{F_\infty}\}.
\]
A valuation in $\mathcal V_F$ is said to \textbf{approach} the polynomial $F$. The subset $\mathcal{V}_{F_\infty}$ consists of all the maximal elements in $\mathcal V_F$. The elements in $\mathcal{V}_{F_\infty}$ correspond to extensions of $\nu_0$ to the simple extension $K(\eta)=K[x]/(F)$ of $K$ and those correspond to the irreducible factors of $F$ in the \textit{henselization} of $(K,\nu_0)$.

Our first goal is to characterize the elements of $\mathcal V_F$. For any valuation $\nu$ on a ring $R$ we can construct the graded ring ${\rm gr}_\nu(R)$ associated to it and a natural map $R\setminus\SU(\nu)\lra \mathcal {\rm gr}_\nu(R)$ denoted by $f\mapsto \inv_\nu(f)$ (see Section \ref{prelimi}). For $\nu\in\mathcal V\setminus\{\nu_0\}$ we will denote by $\mathcal G_\nu$ the graded ring ${\rm gr}_\nu(K[x])$ and by $\mathcal U_\nu$ the set of all units of $\mathcal G_\nu$.
\begin{Teo}\label{Propochata}
A non-maximal valuation $\nu\in \mathcal V$ belongs to $\mathcal V_F$ if and only if $\inv_\nu(F)\notin \mathcal U_\nu$. 
\end{Teo}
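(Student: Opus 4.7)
\emph{Forward direction $(\Rightarrow)$.} Suppose $\nu \in \mathcal V_F$, witnessed by some $\mu \in \mathcal V_{F_\infty}$ with $\nu \leq \mu$. I argue by contradiction: if $\inv_\nu(F) \in \mathcal U_\nu$, pick $g \in K[x]$ with $\inv_\nu(g)\cdot\inv_\nu(F) = \inv_\nu(1)$ in $\mathcal G_\nu$, which translates to $\nu(gF) = 0$ and $\nu(gF - 1) > 0$. Since $\nu \leq \mu$, also $\mu(gF - 1) > 0$. But $\mu(F) = \infty$ forces $\mu(gF) = \infty$, so by the ultrametric property $\mu(gF - 1) = \mu(1) = 0$, a contradiction.

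\emph{Backward direction $(\Leftarrow)$.} Assume $\inv_\nu(F) \notin \mathcal U_\nu$. The plan is to construct $\mu \in \mathcal V_{F_\infty}$ with $\mu \geq \nu$, using the Mac Lane--Vaqui\'e theory of augmented valuations. Because $\inv_\nu(F)$ is a homogeneous non-unit of $\mathcal G_\nu$, graded-ring arguments (as in \cite{Mac_1, Vaq_1, SopivNova}) produce a key polynomial $Q$ for $\nu$ whose image $\inv_\nu(Q)$ divides $\inv_\nu(F)$ in $\mathcal G_\nu$. I would then form the augmented valuation $\nu_1 := [\nu; Q, \gamma_1]$, defined on the $Q$-expansion $g = \sum_i a_i Q^i$ by $\nu_1(g) = \min_i\{\nu(a_i) + i\gamma_1\}$, for some $\gamma_1 > \nu(Q)$. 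This is a valuation with $\nu_1 \geq \nu$ and $\nu_1(F) > \nu(F)$, since the divisibility $\inv_\nu(Q) \mid \inv_\nu(F)$ forces the minimum in the $Q$-expansion of $F$ to be attained at some $i \geq 1$. Iterating transfinitely along an increasing chain $\{\nu_j\}$ and passing to the pointwise supremum produces the desired $\mu \in \mathcal V_{F_\infty}$.

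\emph{Main obstacle.} The technical heart is in the backward direction, with two delicate points: (i) extracting a key polynomial $Q$ whose image divides $\inv_\nu(F)$ in $\mathcal G_\nu$, which uses the graded-ring structure developed in \cite{SopivNova}; and (ii) verifying that the transfinite iteration actually reaches $\mu(F) = \infty$ rather than stabilizing at a finite value. At each stage one must check that the condition $\inv_{\nu_j}(F) \notin \mathcal U_{\nu_j}$ is preserved, guaranteeing that another augmentation is available. Both points are standard within the Mac Lane--Vaqui\'e framework but require careful handling to apply cleanly to the fixed irreducible polynomial $F$.
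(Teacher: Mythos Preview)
Your forward direction is correct and in fact cleaner than the paper's: the paper extracts a key polynomial $Q$ with $\nu(Q)<\mu(Q)$, augments, and uses Theorem~\ref{lemasobreexenso} to exhibit a prime divisor of $\inv_\nu(F)$, whereas your unit-inverse contradiction argument is entirely elementary and avoids key polynomials altogether.

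Your backward direction follows the same overall strategy as the paper (the paper packages the transfinite iteration as a Zorn's lemma argument on the set $\mathcal S$ of valuations $\nu'\geq\nu$ with $\inv_{\nu'}(F)\notin\mathcal U_{\nu'}$ or $F\in\SU(\nu')$), but your outline glosses over the two places where the real work happens. First, at successor stages you cannot take an arbitrary $\gamma_1>\nu(Q)$: if $\gamma_1$ is too large, $\inv_{\nu_1}(F)$ may become a unit and the iteration stalls. The paper's Proposition~\ref{proposparrame} chooses $\gamma_1$ carefully (strictly below a value determined by the factorization $\inv_\nu(F)=\phi^s\inv_\nu(G)$) so that $\inv_{\nu_1}(Q)\mid\inv_{\nu_1}(F)$ is guaranteed. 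Second, and more seriously, ``passing to the pointwise supremum'' at limit stages is not correct: the chain $\{\nu_j\}$ may have a polynomial $Q$ of degree strictly less than $\deg(F)$ on which $\{\nu_j(Q)\}$ is strictly increasing, and then the pointwise supremum either fails to land in $\Gamma_\infty$ or has support different from $(F)$. The paper's Lemma~\ref{lemausadoduasvezes} handles exactly this: it takes such a $Q$ of minimal degree, picks a specific finite value $\beta$ for it (read off from the $Q$-expansion of $F$), and forms the limit augmentation $[\{\nu_i\};\nu(Q)=\beta]$, checking that the non-unit condition on $\inv_\nu(F)$ survives. You flag these as obstacles, but your proposed resolution (pointwise supremum) would fail; the actual fix is the content of Lemma~\ref{lemausadoduasvezes}.
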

Take $\nu\in \mathcal V\setminus\{\nu_0\}$ and a monic polynomial $Q\in K[x]$. Then $Q$ is said to be a \textbf{Mac Lane-Vaqui\'e key polynomial} for $\nu$ if $\inv_\nu(Q)$ is a prime element of $\mathcal G_\nu$ and for every $f\in K[x]$ we have
\[
\inv_\nu(Q)\mid \inv_\nu(f)\Lra \deg(Q)\leq \deg(f).
\] 
We denote by KP($\nu$) the set of all Mac Lane-Vaqui\'e key polynomials for $\nu$.

For every homogeneous irreducible element $\phi \in\mathcal G_\nu$ there exists $Q\in {\rm KP}(\nu)$ such that $\phi=u\cdot \inv_\nu(Q)$ for some $u\in \mathcal U_\nu$ (see Theorem 6.8 of \cite{Nart1}). 

\begin{Prop}\label{proposparrame}
Take a non-maximal valuation $\nu\in \mathcal V_F$. For each homogeneous irreducible factor $\phi$ of $\inv_\nu(F)$ take $Q\in {\rm KP}(\nu)$ such that $\phi=u\cdot \inv_\nu(Q)$ for some $u\in \mathcal U_\nu$. Then there exists $\nu'\in \mathcal V$ such that
\[
\nu<\nu',\ \nu(Q)<\nu'(Q), \ \nu(F)<\nu'(F)\mbox{ and }\inv_{\nu'}(Q)\mid \inv_{\nu'}(F).
\]
\end{Prop}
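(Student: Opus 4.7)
My plan is to construct $\nu'$ as a Mac Lane augmented valuation of the form $\nu'=[\nu;\,\nu'(Q)=\gamma]$ for a carefully chosen $\gamma\in\Gamma$ with $\gamma>\nu(Q)$. For any $f\in K[x]$, write the $Q$-expansion $f=\sum_{j\geq 0}f_jQ^j$ with $\deg f_j<\deg Q$ and set $\nu'(f):=\min_{j}\{\nu(f_j)+j\gamma\}$; the theory of key polynomials, as in \cite{Mac_1}, guarantees this is a valuation on $K[x]$ extending $\nu_0$ with values in $\Gamma_\infty$, and also that $\nu(f)=\min_j\{\nu(f_j)+j\nu(Q)\}$ because $Q\in{\rm KP}(\nu)$. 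Since $\gamma>\nu(Q)$, comparing the two formulas immediately gives $\nu\leq\nu'$ and $\nu'(Q)=\gamma>\nu(Q)$, so two of the four required assertions follow from any such $\gamma$. I then have to pin $\gamma$ down so that $\nu(F)<\nu'(F)$ and $\inv_{\nu'}(Q)\mid\inv_{\nu'}(F)$ also hold.

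The next step is to extract a numerical consequence of the given divisibility $\inv_\nu(Q)\mid\inv_\nu(F)$ in terms of the $Q$-expansion $F=\sum_{j=0}^nF_jQ^j$. In $\mathcal G_\nu$ one has
\[
\inv_\nu(F)=\sum_{j\in S_0}\inv_\nu(F_j)\,\inv_\nu(Q)^j,\qquad S_0:=\{j:\nu(F_j)+j\nu(Q)=\nu(F)\},
\]
and the defining property of a Mac Lane--Vaqui\'e key polynomial forbids $\inv_\nu(Q)\mid\inv_\nu(F_0)$ since $\deg F_0<\deg Q$. Hence the hypothesis forces $0\notin S_0$, that is, $\nu(F_0)>\nu(F)$.

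With this inequality in hand I pick any $j^\ast\geq 1$ satisfying $\nu(F_{j^\ast})+j^\ast\nu(Q)=\nu(F)$ and choose
\[
\gamma\in\left(\nu(Q),\;\nu(Q)+\frac{\nu(F_0)-\nu(F)}{j^\ast}\right)\cap\Gamma,
\]
which is nonempty because $\nu(F_0)>\nu(F)$ and the divisible group $\Gamma$ is densely ordered. A term-by-term check shows $\nu(F_j)+j\gamma>\nu(F)$ for every $j\geq 0$, yielding $\nu'(F)>\nu(F)$. The upper bound on $\gamma$ guarantees $\nu(F_{j^\ast})+j^\ast\gamma<\nu(F_0)$, so $j=0$ does not attain the minimum defining $\nu'(F)$; applying the translation of the previous paragraph with $\nu'$ in place of $\nu$ then delivers $\inv_{\nu'}(Q)\mid\inv_{\nu'}(F)$.

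The one non-mechanical step is the translation between the graded-ring divisibility $\inv_\nu(Q)\mid\inv_\nu(F)$ and the inequality $\nu(F_0)>\nu(F)$, which depends critically on the defining implication $\inv_\nu(Q)\mid\inv_\nu(g)\Rightarrow\deg Q\leq\deg g$ of a Mac Lane--Vaqui\'e key polynomial. Everything else, including the verification that $\nu'\in\mathcal V$ and the four asserted inequalities, is a direct computation with the augmented valuation formula.
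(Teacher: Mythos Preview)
Your proof is correct and follows essentially the same approach as the paper's: both construct $\nu'$ as an augmentation $[\nu;\nu'(Q)=\gamma]$ with $\gamma$ chosen just above $\nu(Q)$, small enough that the ``constant-free'' part of $F$ (with respect to $Q$) still dominates. The only cosmetic difference is that the paper factors $\inv_\nu(F)=\inv_\nu(Q)^s\inv_\nu(G)$ and works with the decomposition $F=Q^sG+(F-Q^sG)$ to bound $\gamma$, whereas you use the full $Q$-expansion and the inequality $\nu(F_0)>\nu(F)$; both bounds serve the same purpose and lead to the same conclusion.
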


An important consequence of Theorem \ref{Propochata} and the above proposition is the following.

\begin{Teo}\label{corusadoprafrente}
Take a non-maximal valuation $\nu\in \mathcal V_F$ and write
\[
\inv_\nu(F)=u\cdot\prod_{i=1}^n\phi_i^{r_i} 
\]
where each $\phi_i$ is homogeneous and irreducible, $u\in\mathcal U_\nu$ and $\phi_i\nmid \phi_j$ if $i\neq j$. Then there exist distinct valuations $\mu_1,\ldots,\mu_n\in \mathcal V_{F_\infty}$ such that $\nu< \mu_i$ for every $i$, $1\leq i\leq n$. 
\end{Teo}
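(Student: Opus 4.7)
The plan is to apply Proposition \ref{proposparrame} once per irreducible factor $\phi_i$ of $\inv_\nu(F)$, promote each resulting valuation to an element of $\mathcal V_{F_\infty}$ via Theorem \ref{Propochata}, and then argue that the $n$ elements of $\mathcal V_{F_\infty}$ so produced are pairwise distinct because each of them ``selects'' a different factor $\phi_i$.

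Concretely, for each $i\in\{1,\ldots,n\}$ pick $Q_i\in {\rm KP}(\nu)$ with $\phi_i=u_i\cdot\inv_\nu(Q_i)$, as allowed by the comment preceding Proposition \ref{proposparrame}, and apply that proposition to $\phi_i$ to obtain $\nu_i'\in\mathcal V$ satisfying $\nu<\nu_i'$, $\nu(Q_i)<\nu_i'(Q_i)$, $\nu(F)<\nu_i'(F)<\infty$ and $\inv_{\nu_i'}(Q_i)\mid \inv_{\nu_i'}(F)$. The divisibility makes $\inv_{\nu_i'}(F)$ a well-defined non-unit in $\mathcal G_{\nu_i'}$, so Theorem \ref{Propochata} places $\nu_i'$ in $\mathcal V_F$ as soon as $\nu_i'$ is non-maximal; in the exceptional case where $\nu_i'$ is already maximal in $\mathcal V$, the finiteness $\nu_i'(F)<\infty$ combined with the same divisibility forces $\nu_i'\in\mathcal V_{F_\infty}$ outright. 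Either way we may pick $\mu_i\in\mathcal V_{F_\infty}$ with $\nu_i'\leq \mu_i$, and hence $\nu<\mu_i$.

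For distinctness, suppose toward a contradiction that $\mu_i=\mu_j=:\mu$ for some $i\neq j$. Then $\mu(Q_i)\geq\nu_i'(Q_i)>\nu(Q_i)$ and, symmetrically, $\mu(Q_j)>\nu(Q_j)$. On the other hand, the Mac Lane--Vaqui\'e description of a valuation $\mu\in\mathcal V$ lying strictly above $\nu$ shows that the set of key polynomials of $\nu$ on which $\mu$ strictly increases forms a single $\nu$-equivalence class, determined by the first augmentation in any chain of augmentations from $\nu$ to $\mu$. Consequently $Q_i$ and $Q_j$ would have to be $\nu$-equivalent, contradicting the hypothesis that the primes $\phi_i$ and $\phi_j$ of $\mathcal G_\nu$ are non-associate.

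The main obstacle is precisely the structural fact invoked in the distinctness step: that the set of key polynomials for $\nu$ whose value strictly increases under a fixed $\mu>\nu$ lies in a single $\nu$-equivalence class. It is standard in Mac Lane--Vaqui\'e theory but, depending on the conventions adopted in the paper, may well require its own lemma or a careful appeal to \cite{Nart1} before the distinctness of the $\mu_i$ can be concluded.
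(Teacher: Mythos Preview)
Your overall plan matches the paper's: apply Proposition \ref{proposparrame} for each $\phi_i$ to obtain $\nu_i'$, use Theorem \ref{Propochata} to place $\nu_i'$ in $\mathcal V_F$, and then pick $\mu_i\in\mathcal V_{F_\infty}$ above $\nu_i'$.

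Two remarks. First, your ``exceptional case'' discussion is both confused and unnecessary: you cannot have $\nu_i'\in\mathcal V_{F_\infty}$ together with $\nu_i'(F)<\infty$, since membership in $\mathcal V_{F_\infty}$ means precisely $\nu_i'(F)=\infty$. Fortunately the case never arises: from the proof of Proposition \ref{proposparrame}, $\nu_i'=[\nu;\nu_i'(Q_i)=\alpha]$ is an ordinary augmentation with $\alpha\in\Gamma$, and any such valuation is non-maximal in $\mathcal V$ (one can augment further along $Q_i$). So Theorem \ref{Propochata} applies directly, as the paper assumes without comment.

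Second, and more substantively, the distinctness step. You appeal to the structural fact that the key polynomials of $\nu$ on which a fixed $\mu>\nu$ strictly increases form a single $\nu$-equivalence class, and you correctly flag this as possibly needing its own justification. The paper bypasses this entirely with a short self-contained argument using only Proposition \ref{propsobucara} and Theorem \ref{lemasobreexenso}, exploiting the explicit form of $\nu_i'$. Namely: since $\nu_i'=[\nu;\nu_i'(Q_i)=\alpha]$ and $\inv_\nu(Q_i)\nmid\inv_\nu(Q_j)$ for $i\neq j$, the last clause of Theorem \ref{lemasobreexenso} gives $\nu_i'(Q_j)=\nu(Q_j)$. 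Then Proposition \ref{propsobucara}, applied to $\nu<\nu_i'\leq\mu_i$, propagates this equality to $\mu_i(Q_j)=\nu(Q_j)$. On the other hand $\mu_j(Q_j)\geq\nu_j'(Q_j)>\nu(Q_j)$, so $\mu_i(Q_j)\neq\mu_j(Q_j)$ and hence $\mu_i\neq\mu_j$. This is precisely the special case of your ``single equivalence class'' fact that is needed, obtained without any external input.
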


An immediate consequence of the above result is the following.

\begin{Cor}
If $\mu$ is the only extension of $\nu_0$ to $L=K[x]/(F)$, then for every $\nu\in\mathcal V$, with $\nu< \mu$, have $\inv_\nu(F)=u\cdot\phi^r$ for some irreducible element $\phi$ in $\mathcal G_\nu$ and $u\in \mathcal U_\nu$. 
\end{Cor}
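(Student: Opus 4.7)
The plan is to derive this corollary as an immediate consequence of Theorem \ref{corusadoprafrente}, by exploiting the uniqueness of the extension of $\nu_0$ to $L$ to collapse the number of irreducible factors in the factorization of $\inv_\nu(F)$ down to one.

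First, I would verify that Theorem \ref{corusadoprafrente} is genuinely applicable to the given $\nu$. The assumption $\nu < \mu$ with $\mu \in \mathcal{V}_{F_\infty}$ immediately places $\nu$ in $\mathcal V_F$ (taking $\mu$ itself as the bounding maximal element) and makes $\nu$ non-maximal in $\mathcal V$. The one subtlety is ensuring that $\inv_\nu(F)$ is a well-defined nonzero element of $\mathcal G_\nu$, i.e.\ that $F \notin \SU(\nu)$. Since $\nu \leq \mu$, the prime ideal $\SU(\nu)$ is contained in $\SU(\mu)=(F)$, so it is either $(0)$ or $(F)$. The second possibility would force $\nu \in \mathcal V_{F_\infty}$, and the uniqueness hypothesis would then give $\nu = \mu$, contradicting $\nu < \mu$. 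Hence $\SU(\nu)=(0)$ and $\inv_\nu(F)$ is a nonzero homogeneous element of $\mathcal G_\nu$.

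Next, I would write the canonical factorization
\[
\inv_\nu(F) = u\cdot\prod_{i=1}^n \phi_i^{r_i},
\]
with the $\phi_i$ pairwise non-associated homogeneous irreducibles and $u \in \mathcal U_\nu$, and invoke Theorem \ref{corusadoprafrente} to produce pairwise distinct valuations $\mu_1,\ldots,\mu_n \in \mathcal V_{F_\infty}$ with $\nu < \mu_i$ for each $i$. Because the elements of $\mathcal V_{F_\infty}$ are in bijection with the extensions of $\nu_0$ to $L=K[x]/(F)$, and by hypothesis there is only one such extension, we have $\mathcal V_{F_\infty}=\{\mu\}$. The distinctness of the $\mu_i$ therefore forces $n = 1$, yielding $\inv_\nu(F)=u\cdot\phi_1^{r_1}$ with $\phi_1$ irreducible in $\mathcal G_\nu$, which is exactly the desired form.

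There is no genuine obstacle: the corollary is really just a translation of Theorem \ref{corusadoprafrente} under the assumption $|\mathcal V_{F_\infty}|=1$. The only point requiring care is the initial check that $\SU(\nu)\neq (F)$, and this is exactly where the hypothesis $\nu<\mu$ (rather than $\nu\leq \mu$) gets used.
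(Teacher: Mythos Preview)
Your proposal is correct and matches the paper's approach exactly: the paper states this corollary as ``an immediate consequence'' of Theorem~\ref{corusadoprafrente} and gives no further proof. Your additional care in verifying that $\nu$ is non-maximal with $F\notin\SU(\nu)$ (so that Theorem~\ref{corusadoprafrente} applies and $n\geq 1$ by Theorem~\ref{Propochata}) makes explicit what the paper leaves implicit.
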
  

The next theorem is the second main result of this paper.
\begin{Teo}\label{mainthem}
Take a non-maximal valuation $\nu\in \mathcal V_F$. For each homogeneous irreducible factor $\phi\in\mathcal G_\nu$ of $\inv_\nu(F)$ write $\phi=u\cdot\inv_\nu(Q)$ with $u\in \mathcal U_\nu$ and $Q\in {\rm KP}(\nu)$. Then, there exist $\alpha_1\in\Gamma$ and $\mu\in \mathcal V_F$ such that $\nu(Q)<\alpha_1=\mu(Q)$ and for every $\mu'\in \mathcal V_F$ we have
\[
\mu\leq \mu'\Lra \mu'(Q)=\alpha_1.
\]
\end{Teo}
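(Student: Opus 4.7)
The plan is to define $\alpha_1$ via the Newton polygon of $F$ with respect to $(\nu, Q)$ and to realize $\mu$ as the augmented valuation $[\nu;\, Q \mapsto \alpha_1]$. Writing the $Q$-expansion $F = \sum_{i=0}^{n} f_i Q^i$ with $\deg f_i < \deg Q$, the hypothesis $\phi = u \cdot \inv_\nu(Q) \mid \inv_\nu(F)$, combined with the defining property of a key polynomial (which gives $\inv_\nu(Q) \nmid \inv_\nu(f_i)$ for each $i$), forces $\nu(f_0) > \nu(F) = \min_i\{\nu(f_i) + i\nu(Q)\}$. Hence the Newton polygon of $F$ with respect to $(\nu, Q)$—the lower convex hull of $\{(i, \nu(f_i))\}$—has a leftmost side of slope strictly less than $-\nu(Q)$. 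Let $-\alpha_1$ denote this slope; explicitly,
\[
\alpha_1 \;=\; \max_{1 \le i \le n} \frac{\nu(f_0) - \nu(f_i)}{i},
\]
and $\alpha_1 > \nu(Q)$ by construction. I then define $\mu$ on $Q$-expansions by
\[
\mu\!\left(\sum h_i Q^i\right) := \min_i\{\nu(h_i) + i\alpha_1\};
\]
standard Mac Lane--Vaqui\'e theory confirms this is a valuation on $K[x]$ extending $\nu_0$ with $\mu(Q) = \alpha_1$.

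To show $\mu \in \mathcal V_F$, I would exhibit $\mu_F \in \mathcal V_{F_\infty}$ dominating $\mu$. Interpreting the Newton polygon slopes as the values $\mu_F(Q)$ for elements $\mu_F \in \mathcal V_{F_\infty}$ with $\nu \le \mu_F$ (these correspond to irreducible factors of $F$ in the henselization of $(K,\nu_0)$), the leftmost slope $-\alpha_1$ is realized by at least one such $\mu_F$. For this $\mu_F$, the fact that $\mu_F$ agrees with $\nu$ on $K[x]_{<d}$ with $d := \deg Q$ (a standard consequence of $Q \in \mathrm{KP}(\nu)$ together with $\nu \le \mu_F$), yields
\[
\mu_F(g) \;\ge\; \min_i\{\mu_F(h_i) + i\mu_F(Q)\} \;=\; \min_i\{\nu(h_i) + i\alpha_1\} \;=\; \mu(g)
\]
for every $g = \sum h_i Q^i \in K[x]$, giving $\mu \le \mu_F$.

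For the equivalence, the forward implication $\mu \le \mu' \Rightarrow \mu'(Q) = \alpha_1$ combines $\mu'(Q) \ge \mu(Q) = \alpha_1$ with the upper bound $\mu_F''(Q) \le \alpha_1$ (maximality from the Newton polygon) on any $\mu_F'' \in \mathcal V_{F_\infty}$ above $\mu'$. The converse, $\mu'(Q) = \alpha_1 \Rightarrow \mu \le \mu'$, is more delicate: picking $\mu_F''$ above $\mu'$, maximality gives $\mu_F''(Q) = \alpha_1$, and then the Newton polygon correspondence forces $\mu_F''$ to lie in the branch of $\mathcal V_{F_\infty}$ corresponding to $\phi$, so in particular $\nu \le \mu_F''$. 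The set $\{\eta \in \mathcal V : \eta \le \mu_F''\}$ being totally ordered (tree structure), $\mu$ and $\mu'$ are comparable, and a $Q$-expansion comparison using the agreement of $\mu'$ with $\nu$ on $K[x]_{<d}$ yields $\mu \le \mu'$. The main obstacle I anticipate lies precisely in this last point—showing that $\mu_F''(Q) = \alpha_1$ forces $\mu_F'' \ge \nu$, which requires carefully leveraging the Newton polygon theory to pin down which branch of $\mathcal V_{F_\infty}$ can realize the value $\alpha_1$ for the specific pair $(\nu,Q)$ chosen.
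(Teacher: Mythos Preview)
Your construction of $\alpha_1$ and $\mu=[\nu;\mu(Q)=\alpha_1]$ is exactly what the paper does, and your argument that $\alpha_1>\nu(Q)$ is correct. However, there are two genuine problems.

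First, you have misread the statement: in this paper $\Lra$ is \emph{$\Longrightarrow$}, not $\Longleftrightarrow$. Only the implication $\mu\le\mu'\Rightarrow\mu'(Q)=\alpha_1$ is asserted. The converse you attempt is not claimed, and your own assessment that it is ``more delicate'' is right---it is not part of the theorem and would in fact require extra hypotheses to even be true in general.

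Second, and more seriously, your argument that $\mu\in\mathcal V_F$ is circular. You invoke a correspondence between the slopes of $\Delta_{Q,\nu}(F)$ and the values $\mu_F(Q)$ for $\mu_F\in\mathcal V_{F_\infty}$ lying above $\nu$; but establishing (pieces of) this correspondence is precisely the content of Theorem~\ref{mainthem} together with Theorem~\ref{corusadoprafrente}. You also assert that any such $\mu_F$ agrees with $\nu$ on $K[x]_{<\deg Q}$ as ``a standard consequence of $Q\in\mathrm{KP}(\nu)$'', which is not automatic: the minimal-degree polynomial on which $\nu$ and $\mu_F$ disagree is \emph{some} key polynomial, but there is no reason it must have degree $\ge\deg Q$ without further argument. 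The paper avoids all of this by using Theorem~\ref{Propochata}: since $\mu(Q)=\alpha_1$ equals minus the first slope, the set $S_{Q,\mu}(F)$ contains $0$ and at least one positive index, whence $\inv_\mu(Q)\mid\inv_\mu(F)$ and $\inv_\mu(F)\notin\mathcal U_\mu$; Theorem~\ref{Propochata} then gives $\mu\in\mathcal V_F$ immediately.

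For the forward implication your idea does work and is a pleasant alternative to the paper's contrapositive: given $\mu\le\mu'\le\mu_F''\in\mathcal V_{F_\infty}$, Proposition~\ref{propsobucara} forces $\mu_F''(f_i)=\nu(f_i)$, and then $\mu_F''(F)=\infty$ with $\nu(f_0)<\infty$ yields some $i\ge 1$ with $\nu(f_i)+i\,\mu_F''(Q)\le\nu(f_0)$, hence $\mu_F''(Q)\le\alpha_1$; combined with $\mu'(Q)\ge\alpha_1$ this gives equality. The paper instead shows directly that $\mu'(Q)>\alpha_1$ forces $\inv_{\mu'}(F)=\inv_{\mu'}(f_0)\in\mathcal U_{\mu'}$ and applies Theorem~\ref{Propochata}.
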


An important feature in the above theorem is that we can present explicitly the value $\alpha_1$. Namely, this value is minus the slope of the first side of the \textit{Newton polygon of $F$ with respect to $Q$}. Hence, this value can depend on the choice of $Q$.

The main purpose of this paper is to understand properties of the extensions of $\nu_0$ in terms of key polynomials, Newton polygons and graded rings. The results presented above are for a fixed irreducible polynomial $F$. If we run over all such polynomials, then we cover all the extensions of $\nu_0$ to simple algebraic extensions of $K$. These results present a criterion to decide whether a given valuation is bounded by a valuation on a given simple algebraic extension. Moreover, we present an explicit way to calculate the maximum value for which we can augment such valuations. We intend to use this to present an algorithm to fully classify all the extensions of a given valuation to algebraic extensions. This is work in progress.

One property of extensions that we want to classify is the defect in terms of the degrees of the limit key polynomials that define it. Such relation is well-known in the case of \textit{unique extension}. We hope to use our results to understand the defect of an extension when it is not the unique one.

In this direction, we present an application (to Artin-Schreier extensions) that illustrates our method. Namely, we prove the following.
\begin{Teo}\label{kuhlmannrequs}
Let $(K,\nu_0)$ be a valued field with ${ \rm char}(K)=p>0$, with value group $\Gamma_0$. Let $F(x)=x^p-x-a\in K[x]$ be an irreducible Artin-Schreier polynomial and $L=K[x]/(F)$. Assume that $\nu_0$ is a rank one valuation. We have the following.
\begin{description}
\item[(i)] There exists $b\in K$ such that $\nu_0(F(b))>0$ if and only if there exist $p$ many distinct extensions of $\nu_0$ to $L$.
\item[(ii)] Assume that $\nu_0(F(b))\leq 0$ for every $b\in K$. Set
\[
S:=\left\{\frac{\nu_0(F(b))}{p}\mid b\in K\right\}\subseteq \frac{1}{p}\Gamma_0.
\]
Fix an extension $\nu$ of $\nu_0$ to $L$ and let $e$ and $f$ be the ramification index and inertia degree of $(L|K,\nu)$, respectively. Then we have the following.
\begin{itemize}
\item If $S\not\subseteq \Gamma_0$, then $e=p$.
\item If $S\subseteq \Gamma_0$ and $S$ has a maximum, then $f=p$.
\item If $S\subseteq \Gamma_0$ and $S$ does not have a maximum, then $\nu$ is the unique extension of $\nu_0$ to $K$ and $e=f=1$. In particular,
\[
d:=\frac{[L:K]}{ef}=p.
\]
\end{itemize}
\end{description}
\end{Teo}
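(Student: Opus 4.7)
The proof hinges on the characteristic-$p$ identity
\[
F(x) = Q^p - Q + F(b), \qquad Q = x - b,
\]
valid for any $b \in K$, since $((x-b)+b)^p = (x-b)^p + b^p$. Consequently the $Q$-adic expansion of $F$ has only three nonzero coefficients, and the Newton polygon of $F$ relative to $Q$ has its vertices among $(0, \nu_0(F(b)))$, $(1, 0)$, and $(p, 0)$. All subsequent analysis applies Theorems \ref{Propochata}, \ref{corusadoprafrente}, and \ref{mainthem} to these particularly simple polygons.

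\textbf{Proof of (i).} For the forward direction, assume $\nu_0(F(b)) > 0$ for some $b \in K$ and let $\mu$ be the Gauss-type valuation on $K[x]$ defined via the $Q$-adic expansion by $\mu(\sum c_j Q^j) = \min_j \nu_0(c_j)$. Then $\mathcal G_\mu \cong K\nu_0[Y]$ with $Y = \inv_\mu(Q)$, and since $\overline{F(b)} = 0$ in the residue field,
\[
\inv_\mu(F) = Y^p - Y = \prod_{j \in \F_p}(Y - j)
\]
is a product of $p$ pairwise non-associate homogeneous irreducible factors. Theorem \ref{corusadoprafrente} then yields $p$ distinct valuations in $\mathcal V_{F_\infty}$ above $\mu$, i.e., $p$ distinct extensions of $\nu_0$ to $L$. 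Conversely, if there are $p$ distinct extensions $\nu_1,\ldots,\nu_p$, the fundamental inequality forces $e_i = f_i = d_i = 1$ for each; so the residue of any root $\eta \in L$ of $F$ lies in $K\nu_0$, lifting to some $b \in K$ with $\nu_1(\eta - b) > 0$, and the factorization $F(b) = \prod_j(b - \eta - j)$, in which $\nu_1(b - \eta - j) = 0$ for $j \neq 0$, gives $\nu_0(F(b)) = \nu_1(\eta - b) > 0$.

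\textbf{Proof of (ii), the cases $S \not\subseteq \Gamma_0$ and $S$ with a maximum.} Under the standing hypothesis the identity above refines to show that every summand in $\nu(F(b)) = \sum_j \nu(b - \eta - j)$ equals $\nu(\eta - b)$, so $S = \{\nu(\eta - b) : b \in K\}$ sits in the non-positive part of $\Gamma$. If $S \not\subseteq \Gamma_0$, any witness $b$ forces $\nu L$ to properly contain $\Gamma_0$; since $[L:K] = p$ is prime, $e = p$. If $S \subseteq \Gamma_0$ has maximum $\delta$, pick $b$ attaining it and $c \in K$ with $\nu_0(c) = \delta$, and set $\eta_1 = (\eta - b)/c$. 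The relation $F(\eta) = 0$ becomes
\[
\eta_1^p - c^{1-p}\eta_1 = -F(b)/c^p,
\]
whose right-hand side has $\nu$-value $0$. Passing to residues gives a degree-$p$ equation satisfied by $\overline{\eta_1}$ (Artin-Schreier when $\delta = 0$, purely a $p$-th power equation when $\delta < 0$); were $\overline{\eta_1} \in K\nu_0$, a lift would produce $b' \in K$ with $\nu(\eta - b') > \delta$, contradicting maximality. Hence $f \geq p$, so $f = p$.

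\textbf{The case $S \subseteq \Gamma_0$ without maximum, and main obstacle.} Choose $(b_i)_{i \in I} \subseteq K$ with $\gamma_i := \nu(\eta - b_i)$ strictly increasing and cofinal in $S$; this is a pseudo-convergent sequence in $K$ with pseudo-limit $\eta$ and all $\gamma_i \in \Gamma_0$, so the extension $K \subseteq L$ is immediate and $e = f = 1$. The main obstacle is proving uniqueness of $\nu$. The strategy is to iterate Theorem \ref{mainthem}: any non-maximal $\mu \in \mathcal V_F$ dominated by some extension of $\nu_0$ admits, for some $i$, the polynomial $x - b_i$ as a key polynomial with $\mu(x - b_i) \leq \alpha_1 = \gamma_i$. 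If two distinct elements of $\mathcal V_{F_\infty}$ dominated the same $\mu$, Theorem \ref{corusadoprafrente} would demand that $\inv_\mu(F)$ admit at least two non-associate homogeneous irreducible factors; pulling this factorization back through the identity $F = Q^p - Q + F(b_i)$ would produce an element $b' \in K$ with $\nu(\eta - b') > \gamma_i$ for every $i$, contradicting cofinality of $(\gamma_i)$ in the maximum-less set $S$. Therefore $\nu$ is unique, and $d = [L:K]/(ef) = p$.
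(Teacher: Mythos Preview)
Your treatment of the forward direction of \textbf{(i)} and of the first two bullets of \textbf{(ii)} is essentially the paper's argument and is correct. Two points deserve comment.

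\emph{Converse of \textbf{(i)}.} Your direct argument presupposes that the residue of $\eta$ is defined, i.e.\ that $\nu_1(\eta)\ge 0$; when $\nu_0(a)<0$ this fails, and one must iterate the lifting step. Showing that this iteration terminates (equivalently, that some $b\in K$ satisfies $\nu_1(\eta-b)>0$) needs an additional argument using $d_1=1$. The paper sidesteps this by first proving \textbf{(ii)} and then observing that each case of \textbf{(ii)} forces $ef\ge p$ for at least one extension, so by the fundamental inequality there cannot be $p$ extensions.

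\emph{Uniqueness in the last bullet of \textbf{(ii)}.} Here there is a genuine gap. You write that if two distinct elements of $\mathcal V_{F_\infty}$ dominate the same $\mu$, then ``Theorem~\ref{corusadoprafrente} would demand that $\inv_\mu(F)$ admit at least two non-associate homogeneous irreducible factors.'' But Theorem~\ref{corusadoprafrente} runs in the opposite direction: it produces $n$ extensions from $n$ prime factors, not $n$ prime factors from $n$ extensions. Two distinct elements of $\mathcal V_{F_\infty}$ may well dominate a common $\mu$ at which $\inv_\mu(F)$ is a prime power, branching apart only after a further augmentation. The follow-up claim (``pulling this factorization back \ldots would produce $b'$ with $\nu(\eta-b')>\gamma_i$ for every $i$'') is likewise not substantiated.

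The paper's route here is different, and it is precisely where the rank-one hypothesis is used. From the one-sided Newton polygon one gets $\mu(x-b)=\nu_0(F(b))/p$ for \emph{every} extension $\mu$, so the truncations $\mu_{x-b}$ do not depend on the chosen extension. Proposition~\ref{Cordahora} (which rests on the fact that polynomials in $\mathcal K_1$ have $p$-power degree, a rank-one statement) then shows that every $G$ with $\deg G<p$ is stable along the family $\{\nu_{x-b}\}_b$; hence $\nu(G)=\nu_{x-b}(G)=\mu(G)$ for any two extensions $\nu,\mu$, giving uniqueness. The same stability immediately yields $e=1$, and a short residue computation gives $f=1$. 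Your appeal to pseudo-convergence for immediacy is morally equivalent (via Kaplansky's Lemma~8), but it too tacitly uses the rank-one hypothesis, which your write-up never identifies.
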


We emphasize that some of the results in this paper are not new. For instance, in Th\'eor\`eme 0.1 of \cite{Va_3}, Vaqui\'e presents a characterization of elements in $\mathcal V_F$ similar to the one in Theorem \ref{Propochata}. However, our characterization and its proof are simpler. Also, the idea of using Newton polygons to understand key polynomials and \textit{generating sequences} is not new. It appears in various works, for instance \cite{CMT}, \cite{NMONa} and \cite{GOS}. Moreover, part of Theorem \ref{kuhlmannrequs} appeared already in \cite{Kuhl}. 

This paper is divided as follows. In Section \ref{prelimi} we introduce the basic definitions and notations to be used in the sequel. In Section \ref{Newtonpoly} we present the concept of Newton polygons. This concept gives a geometric interpretation for the main results. Although some of the proofs do not use explicitly Newton polygons, they provide a geometric and visual interpretation for each step.

Finally, in Section \ref{proofofthem} we present the proofs of the main results. In the last section we present the proof of Theorem \ref{kuhlmannrequs}. 

\textbf{Acknowledgements.} We would like to thank Franz-Viktor Kuhlmann for a careful reading, for providing useful suggestions and for pointing out a few mistakes in an earlier version of this paper. We also thank the anonymous referee for corrections and suggestions that made this paper more accurate and readable.

\section{Preliminaries}\label{prelimi}
Throughout this paper, $\N$ will denote the set of positive integers, $\N_0$ the set of non-negative integers and $\Q$ the set of rational numbers.

\begin{Def}
Take a commutative ring $R$ with unity. A \index{Valuation}\textbf{valuation} on $R$ is a mapping $\nu:R\lra \Lambda_\infty :=\Lambda \cup\{\infty\}$ where $\Lambda$ is an ordered abelian group (and the extension of addition and order to $\infty$ in the obvious way), with the following properties:
\begin{description}
	\item[(V1)] $\nu(ab)=\nu(a)+\nu(b)$ for all $a,b\in R$.
	\item[(V2)] $\nu(a+b)\geq \min\{\nu(a),\nu(b)\}$ for all $a,b\in R$.
	\item[(V3)] $\nu(1)=0$ and $\nu(0)=\infty$.
\end{description}
\end{Def}

Let $\nu: R \lra\Lambda_\infty$ be a valuation. The set $\SU(\nu)=\{a\in R\mid \nu(a )=\infty\}$ is called the \textbf{support of $\nu$}.  A valuation $\nu$ is a \index{Valuation!Krull}\textbf{Krull valuation} if $\SU(\nu)=\{0\}$. The \textbf{value group of $\nu$} is the subgroup of $\Lambda$ generated by 
$\{\nu(a)\mid a \in R\setminus \SU(\nu) \}
$ and is denoted by $\nu R$. If $R$ is a field, then we define the \textbf{valuation ring} of $\nu$  by $\VR_\nu:=\{ a\in R\mid \nu(a)\geq 0 \}$. The ring $\VR_\nu$ is a local ring with unique maximal ideal $\MI_\nu:=\{a\in R\mid \nu(a)>0 \}. $ We define the \textbf{residue field} of $\nu$ to be the field $\VR_\nu/\MI_\nu$ and denote it by $R\nu$. The image of $a\in \VR_\nu$ in $R\nu$ is denoted by $a\nu$. If $R=K[x]$, we will identify a non-Krull valuation $\nu$ with the induced valuation on $L=K[x]/(F)$ where $\SU(\nu)=(F)$.

\begin{Prop}[Corollary 2.5 (2) of \cite{MLV}]\label{propsobucara}
Assume $\eta,\nu,\mu\in\mathcal V$ are such that $\eta<\nu<\mu$. For $f\in K[x]$, if $\eta(f)=\nu(f)$, then $\nu(f)=\mu(f)$.
\end{Prop}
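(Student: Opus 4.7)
The plan is to argue by contradiction: assume $\eta(f)=\nu(f)=:\gamma$ while $\mu(f)>\gamma$, and extract a contradiction via Mac Lane-Vaqui\'e key polynomial machinery.

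First I would pick a monic polynomial $Q\in K[x]$ of minimal degree for which $\nu$ is realized as an augmentation of $\eta$ by $Q$, so in particular $Q$ is a key polynomial for $\eta$, $\eta(Q)<\nu(Q)$, and for every $g\in K[x]$ with $\deg g<\deg Q$ we have $\eta(g)=\nu(g)$. The $Q$-expansion $f=\sum_{j=0}^{n} f_j Q^j$ with $\deg f_j<\deg Q$ then satisfies $\eta(f_j)=\nu(f_j)$ for every $j$, and the key polynomial property gives
\[
\eta(f)=\min_j\bigl(\eta(f_j)+j\,\eta(Q)\bigr),\qquad \nu(f)=\min_j\bigl(\eta(f_j)+j\,\nu(Q)\bigr).
\]
If the $\eta$-minimum were attained only at indices $j\geq 1$, then since $\eta(Q)<\nu(Q)$ each such term would become strictly larger on the $\nu$ side, and no other term could dip below $\eta(f)$, forcing $\nu(f)>\eta(f)$ and contradicting the hypothesis. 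Hence the $\eta$-minimum is attained at $j=0$, so $\nu(f)=\eta(f_0)=\nu(f_0)$, and moreover the $\nu$-minimum displayed above is attained \emph{uniquely} at $j=0$.

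Next I would pass to $\mu$ by induction on $\deg f$. The base case $\deg f=0$ is immediate because $\eta,\nu,\mu$ all extend $\nu_0$. For the inductive step, if $\deg f\geq\deg Q$ then $\deg f_j<\deg f$ for every $j$, and the induction hypothesis applied to each $f_j$ (which automatically satisfies $\eta(f_j)=\nu(f_j)$ by the minimality of $\deg Q$) yields $\mu(f_j)=\nu(f_j)$. Combined with $\mu(Q)\geq\nu(Q)$, the term at $j=0$ on the $\mu$ side still equals $\nu(f_0)=\nu(f)$, while the terms for $j\geq 1$ are strictly larger than $\nu(f)$; the ultrametric triangle inequality then collapses $\mu(f)$ to $\mu(f_0)=\nu(f)$.

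The main obstacle is the remaining case $\deg f<\deg Q$. Here one needs that $\mu$ coincides with $\nu$ on all polynomials of degree less than $\deg Q$ — equivalently, that no key polynomial $Q'$ witnessing $\nu<\mu$ can have $\deg Q'<\deg Q$. This is a "monotonicity of minimal witness degree" statement that is essentially equivalent to the proposition itself applied to smaller-degree polynomials, so it must be handled either by a carefully organized simultaneous induction on $\deg f$ (picking a minimal-degree counterexample) or by invoking the Mac Lane-Vaqui\'e structural theorem for the tower of augmentations of $\eta$, which guarantees that each subsequent key polynomial has degree at least $\deg Q$.
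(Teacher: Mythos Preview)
The paper does not supply its own proof here; it cites Corollary~2.5(2) of \cite{MLV}. Nart's argument there hinges on the \emph{tangent direction}: for $\eta<\rho$ take $Q_\rho$ monic of minimal degree with $\eta(Q_\rho)<\rho(Q_\rho)$, so that $Q_\rho\in{\rm KP}(\eta)$ and one has the dichotomy $\eta(g)<\rho(g)\Longleftrightarrow \inv_\eta(Q_\rho)\mid\inv_\eta(g)$. For $\eta<\nu<\mu$ the relation $\eta(Q_\nu)<\mu(Q_\nu)$ and the dichotomy for $\rho=\mu$ give $\inv_\eta(Q_\mu)\mid\inv_\eta(Q_\nu)$; since both are prime in $\mathcal G_\eta$ they are associates, the two dichotomies coincide, and the proposition follows at once.

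Your outline has two gaps. The minor one is the displayed identity $\nu(f)=\min_j\bigl(\eta(f_j)+j\,\nu(Q)\bigr)$: this asserts $\nu=[\eta;\nu(Q)]$, which is false in general (one only has $\nu\ge[\eta;\nu(Q)]$), though your argument survives because only the ultrametric bound $\nu(f)\ge\min_j\nu(f_jQ^j)$ is actually used. The substantive gap is the case $\deg f<\deg Q$, which you correctly isolate but do not close. Option~(b) is circular: the claim that a minimal witness for $\nu<\mu$ has degree $\ge\deg Q$ \emph{is} the proposition for polynomials of degree $<\deg Q$, and nothing guarantees the MLV chain from $\eta$ to $\mu$ passes through $\nu$. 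Option~(a) can be made to work, but the missing ingredient is precisely Nart's step: your $Q$ is $Q_\nu$, and a minimal-degree counterexample $f$ with $\deg f<\deg Q_\nu$ would force $\deg Q_\mu\le\deg f<\deg Q_\nu$, whereas $\inv_\eta(Q_\mu)\mid\inv_\eta(Q_\nu)$ (both prime) gives $\deg Q_\mu=\deg Q_\nu$, a contradiction. Once $\deg Q_\mu=\deg Q_\nu$ is established, the induction on $\deg f$ becomes unnecessary---every coefficient $f_j$ already satisfies $\eta(f_j)=\nu(f_j)=\mu(f_j)$, and your first-paragraph computation finishes the proof for all $f$ directly.
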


\begin{Cor}\label{Corpahultim}
Let $\{\nu_i\}_{i\in I}$ be a totally ordered set in $\mathcal V$. For every $f\in K[x]$ we have that either $\{\nu_i(f)\}_{i\in I}$ is strictly increasing, or there exists $i_0\in I$ such that $\nu_i(f)=\nu_{i_0}(f)$ for every $i\in I$ with $i\geq i_0$.
\end{Cor}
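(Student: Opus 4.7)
The plan is to derive this directly from Proposition \ref{propsobucara}. First, since $\{\nu_i\}_{i\in I}$ is totally ordered and (by assumption on the indexing) $i\leq j$ corresponds to $\nu_i\leq \nu_j$, the function $i\mapsto \nu_i(f)$ is weakly increasing in $\Gamma_\infty$. Thus either this function is strictly increasing, which yields the first alternative of the conclusion, or there exist $i_1<i_2$ in $I$ with $\nu_{i_1}(f)=\nu_{i_2}(f)$.

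In the second case, I would claim that $i_0:=i_1$ witnesses the second alternative. Take any $i\in I$ with $i\geq i_0$. If $i\leq i_2$, weak monotonicity already sandwiches
\[
\nu_{i_0}(f)\leq \nu_i(f)\leq \nu_{i_2}(f)=\nu_{i_0}(f),
\]
forcing $\nu_i(f)=\nu_{i_0}(f)$. If instead $i>i_2$, I apply Proposition \ref{propsobucara} to the chain $\nu_{i_0}<\nu_{i_2}<\nu_i$ (the case where any of these inequalities degenerates to an equality is immediate, so we may assume strictness): the hypothesis $\nu_{i_0}(f)=\nu_{i_2}(f)$ then gives $\nu_{i_2}(f)=\nu_i(f)$, and hence $\nu_i(f)=\nu_{i_0}(f)$ as required.

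There is no real obstacle here; the corollary is essentially a chain-theoretic rephrasing of Proposition \ref{propsobucara}. The only care needed is in splitting the case analysis at indices between $i_0$ and $i_2$ versus strictly beyond $i_2$, so that Proposition \ref{propsobucara} is invoked on a genuine strict three-term chain.
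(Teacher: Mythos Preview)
Your proof is correct and follows essentially the same approach as the paper: both arguments reduce immediately to Proposition \ref{propsobucara} applied to a three-term chain once equality occurs at two indices. The only cosmetic difference is that the paper takes $i_0$ to be the larger of the two indices witnessing equality (so it need not treat the intermediate range separately), whereas you take $i_0$ to be the smaller one and add the easy sandwich argument for $i_0\leq i\leq i_2$; both choices work.
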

\begin{proof}
Suppose that there exists $j,i_0\in I$ with $j<i_0$ such that $\nu_j(f)=\nu_{i_0}(f)$. For any $i>i_0$ we have that $\nu_j<\nu_{i_0}<\nu_i$. Hence, by Proposition \ref{propsobucara} we conclude that $\nu_i(f)=\nu_{i_0}(f)$.
\end{proof}

Let $\mathfrak v=\{\nu_i\}_{i\in I}$ be a totally ordered set in $\mathcal V$, without last element. For $f\in K[x]$ we say that $f$ is \textbf{$\mathfrak v$-stable} if there exists $i_f\in I$ such that
\[
\nu_i(f)=\nu_{i_f}(f)\mbox{ for every }i\in I\mbox{ with }i\geq i_f.
\]
Consider the set
\[
\mathcal C(\mathfrak v):=\{f\in K[x]\mid f\mbox{ is }\mathfrak v\mbox{-stable}\}.
\]
For every $f\in \mathcal C(\mathfrak v)$ we set $\mathfrak v(f)=\nu_{i_f}(f)$. For $f,q\in K[x]$, $q$ non-constant, there exist uniquely determined $f_0,\ldots,f_r\in K[x]$ with $\deg(f_i)<\deg(q)$ for every $i$, $0\leq i\leq r$, such that
\[
f=f_0+\ldots+f_rq^r.
\]
This expression is called the \textbf{$q$-expansion of $f$}.
\begin{Teo}\label{themgenraliaugme}
Suppose that there exists a polynomial not $\mathfrak v$-stable and take $Q$ monic and with smallest degree with this property. Take $\gamma\in \Gamma_\infty$ such that $\gamma>\nu_i(Q)$ for every $i\in I$ and consider the map
\[
\mu(f)=\min_{0\leq j\leq r}\{\mathfrak v(f_j)+j\gamma\},
\]
where $f=f_0+\ldots+f_rQ^r$ is the $Q$-expansion of $f$. Then $\mu$ is a valuation.
\end{Teo}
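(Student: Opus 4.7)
The plan is to verify the valuation axioms (V1)--(V3) directly from the defining formula. First I would check that $\mu$ is well-defined: by minimality of $\deg(Q)$, every polynomial of degree strictly less than $\deg(Q)$ is $\mathfrak{v}$-stable, so each coefficient $f_j$ in a $Q$-expansion lies in $\mathcal C(\mathfrak v)$ and $\mathfrak v(f_j)$ is well defined. A preliminary observation is that $\mathcal C(\mathfrak v)$ is a subring of $K[x]$ on which $\mathfrak v$ satisfies the valuation axioms: for any finite collection of stable polynomials, choosing an index $i$ past all their stabilization points reduces any claim about $\mathfrak v$ to the corresponding property of the single valuation $\nu_i$. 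Axiom (V3) is immediate, and (V2) follows at once because $\sum_j (f_j + g_j)Q^j$ is the $Q$-expansion of $f + g$ and the ultrametric inequality holds coefficient by coefficient.

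The heart of the proof is (V1), and I would reduce it to the following key lemma: for any $p, q \in K[x]$ with $\deg(p), \deg(q) < \deg(Q)$, writing the $Q$-expansion $pq = h_0 + h_1 Q$ (at most two terms, since $\deg(pq) < 2\deg(Q)$), one has $\mathfrak v(h_0) = \mathfrak v(p) + \mathfrak v(q)$ and $\mathfrak v(h_1) + \gamma > \mathfrak v(p) + \mathfrak v(q)$. To prove this, I would fix $i$ large enough that $\nu_i$ has stabilized at its $\mathfrak v$-value on each of $p$, $q$, $h_0$, $h_1$. By Corollary \ref{Corpahultim} the sequence $\{\nu_i(Q)\}_{i \in I}$ is strictly increasing, and it is bounded above by $\gamma$ by hypothesis; thus $\nu_i(h_1) + \nu_i(Q)$ eventually strictly exceeds the constant $\nu_i(h_0)$. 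Applying $\nu_i$ to $pq = h_0 + h_1 Q$ then forces $\nu_i(pq) = \nu_i(h_0)$ (the case $h_1 = 0$ being trivial), which yields the first equality; combining this with $\gamma > \nu_i(Q)$ gives the strict inequality.

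For (V1) itself, write $f = \sum_j f_j Q^j$, $g = \sum_k g_k Q^k$, and apply the key lemma to each product $f_j g_k = h_{jk,0} + h_{jk,1} Q$. The $Q$-expansion of $fg$ is then $\sum_m c_m Q^m$ with
\[
c_m = \sum_{j+k=m} h_{jk,0} + \sum_{j+k=m-1} h_{jk,1},
\]
and the key lemma together with the ultrametric inequality for $\mathfrak v$ gives $\mathfrak v(c_m) + m\gamma \geq \mu(f) + \mu(g)$ for every $m$, hence $\mu(fg) \geq \mu(f) + \mu(g)$. For the reverse inequality, let $j_0$ and $k_0$ be the smallest indices attaining the minima defining $\mu(f)$ and $\mu(g)$, and set $m_0 = j_0 + k_0$. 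Minimality of $j_0, k_0$ implies that $(j_0, k_0)$ is the unique pair with $j + k = m_0$ such that simultaneously $\mathfrak v(f_j) + j\gamma = \mu(f)$ and $\mathfrak v(g_k) + k\gamma = \mu(g)$; all other $h_{jk,0}$ summands appearing in $c_{m_0}$ therefore have strictly larger $\mathfrak v$-value, and the second half of the key lemma makes every $h_{jk,1}$ summand in $c_{m_0}$ strictly larger as well. Hence $\mathfrak v(c_{m_0}) = \mathfrak v(f_{j_0}) + \mathfrak v(g_{k_0})$ and $\mu(fg) \leq \mu(f) + \mu(g)$.

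I expect the key lemma to be the main obstacle: it is the only step that genuinely uses the assumption $\gamma > \nu_i(Q)$ together with the strict growth of $\{\nu_i(Q)\}$, and it must be set up carefully so that the eventual strict inequality $\nu_i(h_1) + \nu_i(Q) > \nu_i(h_0)$ can be extracted cleanly. Once the lemma is in place, (V1) reduces to the combinatorial bookkeeping familiar from the Mac Lane--Vaqui\'e theory of augmented valuations.
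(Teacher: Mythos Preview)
Your overall strategy matches the paper's: both isolate the same key lemma (for $p,q$ of degree less than $\deg Q$, writing $pq=h_0+h_1Q$, one has $\mathfrak v(h_0)=\mathfrak v(p)+\mathfrak v(q)$ and $\mathfrak v(h_1)+\gamma>\mathfrak v(h_0)$) and then deduce that $\mu$ is a valuation. The paper simply invokes Theorem~1.1 of \cite{Nov1} for this last deduction, whereas you carry out the Mac Lane--style bookkeeping explicitly; your version of that bookkeeping is correct.

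There is, however, a genuine gap in your proof of the key lemma. You write that $\{\nu_i(Q)\}_{i\in I}$ is strictly increasing and bounded above by $\gamma$, and conclude ``thus $\nu_i(h_1)+\nu_i(Q)$ eventually strictly exceeds the constant $\nu_i(h_0)$.'' That implication is a non sequitur: a strictly increasing sequence bounded by $\gamma$ has no reason to exceed the fixed value $\mathfrak v(h_0)-\mathfrak v(h_1)$. What is missing is precisely the use of the relation $pq=h_0+h_1Q$ together with the $\mathfrak v$-stability of $pq$, which you only invoke \emph{after} this step. The correct argument (and the one the paper gives) runs by contradiction: if $\mathfrak v(h_0)\neq\mathfrak v(pq)$, then for all sufficiently large $i$ one has
\[
\nu_i(h_1Q)=\nu_i(pq-h_0)=\min\{\nu_i(pq),\nu_i(h_0)\},
\]
which is eventually constant, contradicting the strict increase of $\nu_i(h_1)+\nu_i(Q)$. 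Once $\mathfrak v(h_0)=\mathfrak v(pq)$ is established, the strict inequality follows from
\[
\mathfrak v(h_1)+\gamma>\nu_i(h_1)+\nu_i(Q)=\nu_i(h_1Q)\geq\min\{\nu_i(pq),\nu_i(h_0)\}=\mathfrak v(h_0).
\]
Reorder the logic this way and your proof goes through.
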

\begin{proof}
For $f,g\in K[x]$ with $\deg(f),\deg(g)<\deg(Q)$ write $fg=aQ+c$ with $\deg(c)<\deg(Q)$. We claim that $\mathfrak v(fg)=\mathfrak v(c)$. If this were not the case, we would have that
\[
\nu_i(aQ)=\min\{\nu_i(c),\nu_i(fg)\}=\min\{\mathfrak v(c),\mathfrak v(fg)\}\mbox{ for }i\mbox{ large enough}.
\]
This is a contradiction to the fact that $\{\nu_i(Q)\}_{i\in I}$ is increasing. Since $\mathfrak v(fg)=\nu_i(fg)$, $\mathfrak v(c)=\nu_i(c)$ and $\mathfrak v(a)=\nu_i(a)$ for $i$ large enough and $\gamma>\nu_i(Q)$ for every $i\in I$ we have that
\[
\mathfrak v(a)+\gamma>\nu_i(a)+\nu_i(Q)\geq\min\{\nu_i(c),\nu_i(fg)\}=\mathfrak v(fg)=\mathfrak v(c).
\]
By Theorem 1.1 of \cite{Nov1}, we conclude that $\mu$ is a valuation.
\end{proof}

\begin{Def}
In the case of the above theorem, we will denote $\mu$ by
\[
\mu=[\{\nu_i\}_{i\in I};\mu(Q)=\gamma].
\]
\end{Def}

Let $R$ be an integral domain and $\nu$ a valuation on $R$. For each $\gamma\in \nu(R)$, $\gamma<\infty$, we consider the abelian groups
\begin{equation*}
\mathcal{P}_{\gamma}=\{a \in R \mid \nu(a) \geq \gamma\} \mbox{ and } \mathcal{P}^{+}_{\gamma}=\{a \in R \mid \nu(a) > \gamma\}.
\end{equation*}

\begin{Def}
The \textbf{graded ring of $R$ associated to $\nu$} is defined as
\begin{equation*}
{\rm gr}_\nu(R)= \displaystyle  \bigoplus_{\gamma \in \nu(R)} \mathcal{P}_{\gamma}/\mathcal{P}^{+}_{\gamma}.
\end{equation*}
\end{Def}
It is not difficult to show that ${\rm gr}_\nu(R)$ is an integral domain. For $a\in R\setminus\{\SU(\nu)\}$ we will denote by ${\rm in}_\nu(a)$ the image of $a$ in
\[
\mathcal P_{\nu(a)}/\mathcal P_{\nu(a)}^+\subseteq{\rm gr}_\nu(R).
\]
If $a\in\SU(\nu)$ we set $\inv_\nu(a)=0$.

\begin{Teo}[Theorem 4.2 of \cite{Mac_1}]\label{lemasobreexenso}
Take $\nu\in\mathcal V\setminus\{\nu_0\}$, $Q\in{\rm KP}(\nu)$ and $\alpha\in\Gamma_\infty$ with $\alpha>\nu(Q)$. Consider the map
\[
\mu\left(f\right)=\min_{0\leq i\leq r}\{\nu(f_i)+i\alpha\},
\]
where $f=f_0+\ldots+f_rQ^r$ is the $Q$-expansion of $f$. Then $\mu$ is a valuation on $K[x]$. Moreover, $\nu<\mu$ and for $f\in K[x]$ we have $\nu(f)<\mu(f)$ if and only if ${\rm in}_\nu(Q)\mid {\rm in}_\nu(f)$.
\end{Teo}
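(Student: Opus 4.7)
The plan is to verify that $\mu$ is a valuation by checking the criterion from Theorem 1.1 of \cite{Nov1} (the same tool invoked in the proof of Theorem \ref{themgenraliaugme}), and then to read off both the comparison with $\nu$ and the divisibility characterization from the key polynomial properties of $Q$. The axioms (V2) and (V3) are immediate from the uniqueness of the $Q$-expansion: since the $Q$-expansion of $f+g$ has coefficients $f_i+g_i$, the ultrametric inequality for $\nu$ lifts term-by-term to $\mu$, while $\mu(0)=\infty$ and $\mu(1)=0$ are trivial. The substantive content is multiplicativity (V1).

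The criterion reduces multiplicativity to proving the following: for every $f,g\in K[x]$ with $\deg(f),\deg(g)<\deg(Q)$, writing $fg=aQ+c$ with $\deg(c)<\deg(Q)$, one has $\nu(a)+\alpha>\nu(fg)$. Because $\deg(f),\deg(g)<\deg(Q)$, the key polynomial condition forces $\inv_\nu(Q)\nmid\inv_\nu(f)$ and $\inv_\nu(Q)\nmid\inv_\nu(g)$, and primality of $\inv_\nu(Q)$ in $\mathcal G_\nu$ then gives $\inv_\nu(Q)\nmid\inv_\nu(fg)$. Working in $\mathcal G_\nu$, the decomposition $fg=aQ+c$ forces $\nu(c)=\nu(fg)$ (otherwise $\inv_\nu(fg)=\inv_\nu(a)\inv_\nu(Q)$, contradicting non-divisibility) and $\nu(a)+\nu(Q)>\nu(fg)$ (otherwise $\inv_\nu(aQ)+\inv_\nu(c)=0$ in the appropriate graded piece, forcing $\inv_\nu(Q)\mid\inv_\nu(c)$, which contradicts $\deg(c)<\deg(Q)$). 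Since $\alpha>\nu(Q)$, we obtain $\nu(a)+\alpha>\nu(fg)$, as required.

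For the comparison $\nu<\mu$, the bound $\mu(f)\geq\min_i\{\nu(f_i)+i\nu(Q)\}\geq\nu(f)$ (the first from $\alpha>\nu(Q)$, the second from the ultrametric inequality applied to the $Q$-expansion) gives $\mu\geq\nu$, and $\mu(Q)=\alpha>\nu(Q)$ makes this strict. For the characterization of $\nu(f)<\mu(f)$, I would use the sharper identity $\nu(f)=\min_i\{\nu(f_i)+i\nu(Q)\}$, which follows from the primality of $\inv_\nu(Q)$: otherwise the sum $\sum_i\inv_\nu(f_iQ^i)$ vanishes in its minimum-value graded piece, and cancellation would force $\inv_\nu(Q)\mid\inv_\nu(f_j)$ for some $j$ with $\deg(f_j)<\deg(Q)$. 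Granting this identity, if $\inv_\nu(Q)\mid\inv_\nu(f)$ then the index $i=0$ cannot attain the minimum in $\nu(f)$ (since $\inv_\nu(Q)\nmid\inv_\nu(f_0)$), hence $\nu(f_0)>\nu(f)$, while each $i\geq 1$ satisfies $\nu(f_i)+i\alpha>\nu(f_i)+i\nu(Q)\geq\nu(f)$, so $\mu(f)>\nu(f)$. Conversely, if $\inv_\nu(Q)\nmid\inv_\nu(f)$ then $i=0$ does attain the minimum, so $\nu(f_0)=\nu(f)$, whence $\mu(f)\leq\nu(f_0)=\nu(f)$.

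The main obstacle is the analysis of the decomposition $fg=aQ+c$: this is the one place where primality of $\inv_\nu(Q)$ in $\mathcal G_\nu$ does all the work. Everything else is bookkeeping on $Q$-expansions, but without primality the cross-terms in the product could cancel unpredictably and $\mu$ would fail to be multiplicative; the subsequent identity $\nu(f)=\min_i\{\nu(f_i)+i\nu(Q)\}$ rests on the same algebraic fact, so the entire theorem hinges on this single property of key polynomials.
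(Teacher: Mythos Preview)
The paper does not actually prove this statement: it is quoted as Theorem 4.2 of \cite{Mac_1}, with only the remark that MacLane's original argument (for rank one) carries over verbatim to arbitrary rank. So there is no in-paper proof to compare against; your proposal supplies a self-contained argument where the paper simply cites the literature.

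Your approach is correct and is the natural one in this paper's framework: it parallels the proof of Theorem~\ref{themgenraliaugme}, replacing the limit-stability hypothesis by the key-polynomial hypothesis and invoking the same criterion (Theorem~1.1 of \cite{Nov1}). One small imprecision: from $fg=aQ+c$ with $\inv_\nu(Q)\nmid\inv_\nu(fg)$ and $\inv_\nu(Q)\nmid\inv_\nu(c)$ you can only conclude $\nu(aQ)\geq\nu(fg)$, not the strict inequality you assert (when $\nu(aQ)=\nu(c)$ one has $\inv_\nu(fg)=\inv_\nu(aQ)+\inv_\nu(c)\neq 0$, so $\nu(aQ)=\nu(fg)$ is possible). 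This does not affect the conclusion, since $\alpha>\nu(Q)$ turns $\nu(a)+\nu(Q)\geq\nu(fg)$ into the strict $\nu(a)+\alpha>\nu(fg)$ that the criterion requires. The remaining steps---the identity $\nu(f)=\min_i\{\nu(f_iQ^i)\}$ and the divisibility characterization of $\nu(f)<\mu(f)$---are handled correctly via primality of $\inv_\nu(Q)$ and the degree condition in the definition of a key polynomial.
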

The valuation constructed above will be called an \textbf{augmentation of $\nu$ with respect to $Q$} and denoted by
\[
\mu=[\nu;\mu(Q)=\alpha].
\]
\begin{Obs}
In \cite{Mac_1} the above theorem is stated and proved for rank one valuations. However, the same proof works for any valuation. 
\end{Obs}

\begin{Obs}\label{obsmaclane}
Let $\nu\in \mathcal V$ and suppose that there exists $\mu\in\mathcal V$ such that $\nu<\mu$. If $Q$ is a monic polynomial of smallest degree such that $\nu(Q)<\mu(Q)$, then it follows from the proof of Theorem 8.1 of \cite{Mac_2} that $Q\in {\rm KP}(\nu)$.
\end{Obs}

\section{Newton polygons}\label{Newtonpoly}
Let $f,q\in K[x]$, $q$-monic and non-constant, and $\nu\in\mathcal V\setminus\{\nu_0\}$. Let $f=f_0+\ldots+f_rq^r$ be the $q$-expansion of $f$. Set
\[
\nu_q(f)=\min_{0\leq i\leq r}\{\nu(f_iq^i)\}
\]
and
\[
S_{q,\nu}(f)=\{i\mid 0\leq i\leq r\mbox{ such that }\nu_q(f)=\nu(f_iq^i)\}.
\]
Denote by $\Gamma_\Q$ the divisible hull $\Gamma\otimes_\Z\Q$ of $\Gamma$. Consider the set
\[
V_{f,q}:=\{(i,\nu(f_i))\mid 0\leq i\leq r\mbox{ and }\nu(f_i)\neq \infty\}\subseteq \Q\times \Gamma_\Q.
\]
Assume that $f_0,f_r\notin\SU(\nu)$. 
\begin{Def}
The Newton polygon $\Delta_{q,\nu}(f)$ of $f$ with respect to $q$ and $\nu$ in $\Q\times \Gamma_\Q$ is defined to be the highest convex polygonal line from $(0,\nu(f_0))$ to $(r,\nu(f_r))$ that passes through or below all of the points in $V_{f,q}$.
\end{Def}
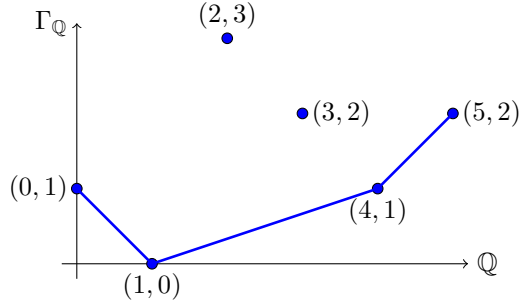
\begin{figure}[H]
    \centering
\begin{tikzpicture}
    \draw[->] (-0.2,0) -- (5.2,0) node[right] {$\Q$};
    \draw[->] (0,-0.2) -- (0,3.2) node[left] {$\Gamma_\Q$};
  \filldraw[fill=blue] (0,1) circle (2pt) node[anchor=east] {$(0,1)$};
  \filldraw[fill=blue] (1,0) circle (2pt) node[below] {$(1,0)$};
  \filldraw[fill=blue] (2,3) circle (2pt) node[above] {$(2,3)$};
  \filldraw[fill=blue] (3,2) circle (2pt) node[right] {$(3,2)$};
  \filldraw[fill=blue] (4,1) circle (2pt) node[below] {$(4,1)$};
  \filldraw[fill=blue] (5,2) circle (2pt) node[right] {$(5,2)$};
  \draw [line width=1.pt,color=blue] (0,1)-- (1,0);
  \draw [line width=1.pt,color=blue] (1,0)-- (4,1);
  \draw [line width=1.pt,color=blue] (4,1)-- (5,2);
  
\end{tikzpicture}
\caption{Newton polygon $\Delta_{q,\nu}(f)$ for $f=f_0+f_1q+\ldots+f_5q^5$ with $
\nu(f_0)=1$, $\nu(f_1)=0$, $\nu(f_2)=3$, $\nu(f_3)=2$, $\nu(f_4)=1$ and $\nu(f_5)=2$}
\end{figure}
\begin{Obs}
When there is no risk of confusion we will denote $\Delta_{q,\nu}(f)$ by $\Delta_q(f)$ and $S_{q,\nu}$ by $S_q(f)$.
\end{Obs}
For $i,k$, $0\leq i<k\leq r$, with $\nu(f_i)\neq \infty$ and $\nu(f_k)\neq \infty$, we denote by $I_{i,k}$ the segment in $\Q\times \Gamma_\Q$ joining $(i,\nu(f_i))$ and $(k,\nu(f_k))$.
\begin{Obs}
The Newton polygon of $f$ with respect to $q$ is the union of $s$ many sets $I_1,\ldots,I_s$, with the following properties.
\begin{description}
\item[(i)] $s\leq r$.
\item[(ii)] There exist $i_0,\ldots,i_s$, $0=i_0<i_1<\ldots<i_s=r$, such that
\[
I_j:=I_{i_{j-1},i_j}\mbox{ for }1\leq j\leq s.
\]
\end{description}
The sets $I_1,\ldots,I_s$ are called the \textbf{sides of $\Delta_q(f)$}.
\end{Obs}
\begin{Def}
For each $j$, $1\leq j\leq s$, we define the \textbf{slope of the side $I_j$} by
\[
\alpha_j:=\frac{\nu(f_{i_j})-\nu(f_{i_{j-1}})}{i_j-i_{j-1}}\in \Gamma_\Q.
\]
We also say that the length of the side $I_j$ is $i_j-i_{j-1}$.
\end{Def}

\begin{Obs}
For each $j$, $1\leq j\leq s$, and $i$, $0\leq i\leq r$, we have the following.
\begin{description}
\item[(i)]
\[
\nu(f_i)-i\alpha_j\geq \nu(f_{i_j})-i_j\alpha_j=\nu(f_{i_{j-1}})-i_{j-1}\alpha_j.
\]
\item[(ii)] If $0\leq i<i_{j-1}$ or $i_j<i\leq r$, then
\[
\nu(f_i)-i\alpha_j> \nu(f_{i_j})-i_j\alpha_j.
\]
\end{description}
\end{Obs}

\begin{Def}
For each $\alpha\in\Gamma_\Q$ and $P=(a,b)\in\Q\times \Gamma_{\Q}$ we denote by $L_{\alpha,P}$ the equation of the line passing through $P$ with slope $\alpha$, i.e., $L_{\alpha,P}$ is the map
\[
L_{\alpha,P}:\Q\lra \Gamma_{\Q}
\]
given by
\[
L_{\alpha,P}(x)=\alpha x+(b-\alpha a).
\]
\end{Def}

\begin{Lema}\label{Lemaquefazfuncioan}
For every $i$, $1\leq i\leq r$, set $P_i=(i,\nu(f_i))$. If $\nu(q)=-\alpha$, then
\[
\nu(f_iq^i)=L_{\alpha,P_i}(0).
\]
\end{Lema}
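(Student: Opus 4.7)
The plan is to unwind both definitions and observe that each side equals $\nu(f_i)-i\alpha$. On the right, plugging $P_i=(i,\nu(f_i))$ into the definition of $L_{\alpha,P}$ (with $a=i$ and $b=\nu(f_i)$) yields
\[
L_{\alpha,P_i}(x)=\alpha x + \bigl(\nu(f_i)-\alpha i\bigr),
\]
so evaluating at $x=0$ gives $L_{\alpha,P_i}(0)=\nu(f_i)-i\alpha$. On the left, axiom (V1) for the valuation $\nu$ together with the hypothesis $\nu(q)=-\alpha$ gives
\[
\nu(f_iq^i)=\nu(f_i)+i\nu(q)=\nu(f_i)-i\alpha.
\]
Comparing the two expressions proves the claim.

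The only point that requires a word is the implicit assumption $\nu(f_i)\neq\infty$; indeed, the set $V_{f,q}$ was defined precisely to discard the indices $i$ with $f_i\in\SU(\nu)$, so $P_i$ is well defined exactly when $\nu(f_i)\in\Gamma_\Q$, and in that range the computation above is literal addition in $\Gamma_\Q$. There is essentially no obstacle here; the lemma is a direct translation between the additive language of valuations and the geometric language of lines in $\Q\times\Gamma_\Q$, and its real purpose is to serve as the bridge that justifies reading slopes of $\Delta_{q,\nu}(f)$ as candidate values for $\nu(q)$ in the subsequent Newton polygon arguments.
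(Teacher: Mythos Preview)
Your proof is correct and matches the paper's approach exactly: both compute each side directly to get $\nu(f_i)-i\alpha$. Your additional remark about the implicit assumption $\nu(f_i)\neq\infty$ is a reasonable clarification, though the paper does not include it.
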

\begin{proof}
We have
\[
L_{\alpha,P_i}(0)=\nu(f_i)-\alpha i=\nu(f_i)+i\nu(q)=\nu(f_iq^i).
\]
\end{proof}

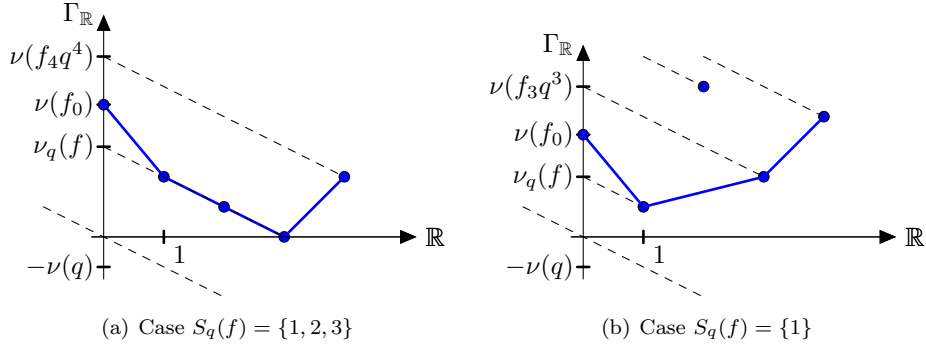
\begin{figure}[!htb]
\centering
\subfigure[Case $S_q(f)=\{1,2,3\}$]{
\begin{tikzpicture}[line cap=round,line join=round,>=triangle 45,x=0.8cm,y=0.8cm]
    \draw[->] (-0.2,0) -- (5.2,0) node[right] {$\R$};
    \draw[->] (0,-0.7) -- (0,3.7) node[left] {$\Gamma_\R$};
  
  \filldraw[fill=blue] (1,1) circle (2pt);
  \filldraw[fill=blue] (2,0.5) circle (2pt);
  \filldraw[fill=blue] (3,0) circle (2pt);
  \filldraw[fill=blue] (4,1) circle (2pt);

  \draw [line width=1.pt,color=blue] (0,2.2)-- (1,1);
  \draw [line width=1.pt,color=blue] (1,1)-- (3,0);
  \draw [line width=1.pt,color=blue] (3,0)-- (4,1);

\draw[->] (-0.2,0) -- (5.2,0) node[right] {$\R$};
   \draw [dashed] (0,1.5)-- (3,0);
   \draw [dashed] (0,3)-- (4,1);      

   \draw [line width=1.pt,color=black] (-0.1,1.5)-- (0.1,1.5);
   \draw [line width=1.pt,color=black] (-0.1,3)-- (0.1,3);
   \draw [line width=1.pt,color=black] (-0.1,2.2)-- (0.1,2.2);

  \filldraw[fill=blue] (0,2.2) circle (2pt) node[left] {$\nu(f_0)$};
  \filldraw[fill=blue] (0,3) circle (0pt) node[left] {$\nu(f_4q^4)$};
  \filldraw[fill=blue] (0,1.5) circle (0pt) node[left] {$\nu_q(f)$};  
        
  \filldraw[fill=black] (0,-0.5) circle (0pt) node[left] {$-\nu(q)$};
  \filldraw[fill=black] (1,0) circle (0pt) node[below right] {$1$};
  \draw [line width=1.pt,color=black] (-0.1,-0.5)-- (0.1,-0.5);        
  \draw [dashed] (-1,0.5)-- (2,-1);      
  \draw [line width=1.pt,color=black] (1,-0.1)-- (1,0.1);

\end{tikzpicture}

}
\subfigure[Case $S_q(f)=\{1\}$]{
\begin{tikzpicture}[line cap=round,line join=round,>=triangle 45,x=0.8cm,y=0.8cm]
    \draw[->] (-0.2,0) -- (5.2,0) node[right] {$\R$};
    \draw[->] (0,-0.7) -- (0,3.2) node[left] {$\Gamma_\R$};
  
  \filldraw[fill=blue] (1,0.5) circle (2pt);
  \filldraw[fill=blue] (2,2.5) circle (2pt);
  \filldraw[fill=blue] (3,1) circle (2pt);
  \filldraw[fill=blue] (4,2) circle (2pt);

  \draw [line width=1.pt,color=blue] (0,1.7)-- (1,0.5);
  \draw [line width=1.pt,color=blue] (1,0.5)-- (3,1);
  \draw [line width=1.pt,color=blue] (3,1)-- (4,2);

\draw[->] (-0.2,0) -- (5.2,0) node[right] {$\R$};
  \draw [dashed] (0,1)-- (1,0.5);
   \draw [dashed] (1,3)-- (2,2.5);
   \draw [dashed] (0,2.5)-- (3,1);
   \draw [dashed] (2,3)-- (4,2);      

   \draw [line width=1.pt,color=black] (-0.1,1.7)-- (0.1,1.7);
   \draw [line width=1.pt,color=black] (-0.1,1)-- (0.1,1);
   \draw [line width=1.pt,color=black] (-0.1,2.5)-- (0.1,2.5);   
  \filldraw[fill=blue] (0,1.7) circle (2pt) node[left] {$\nu(f_0)$};
  \filldraw[fill=blue] (0,1) circle (0pt) node[left] {$\nu_q(f)$};  
  \filldraw[fill=blue] (0,2.5) circle (0pt) node[left] {$\nu(f_3q^3)$};

  \filldraw[fill=black] (0,-0.5) circle (0pt) node[left] {$-\nu(q)$};
  \filldraw[fill=black] (1,0) circle (0pt) node[below right] {$1$};
  \draw [line width=1.pt,color=black] (-0.1,-0.5)-- (0.1,-0.5);        
  \draw [dashed] (-1,0.5)-- (2,-1);      
  \draw [line width=1.pt,color=black] (1,-0.1)-- (1,0.1);
\end{tikzpicture}
}
\caption{Examples when $S_{q}(f)$ is a singleton or not}\label{ilisytraquero}
\end{figure}

\begin{Obs}
Figure \ref{ilisytraquero} illustrates the next result.
\end{Obs}
\begin{Lema}\label{singl}
The set $S_q(f)$ is not a singleton if and only if $\alpha:=-\nu(q)$ is the slope of a side of $\Delta_q(f)$.
\end{Lema}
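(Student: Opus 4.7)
The plan is to exploit the identity from Lemma \ref{Lemaquefazfuncioan}: for every $i$ with $f_i\notin\SU(\nu)$, we have $\nu(f_iq^i) = \nu(f_i) - i\alpha = L_{\alpha,P_i}(0)$, i.e.\ the $y$-intercept of the line through $P_i$ of slope $\alpha$. Thus $i\in S_q(f)$ if and only if $P_i$ lies on the lowest line of slope $\alpha$ meeting the set $\{P_0,\ldots,P_r\}$ from below. This dictionary between algebraic minima and geometric supporting lines will drive the entire argument.

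For the direction ($\Leftarrow$), I assume $\alpha=\alpha_j$ is the slope of some side $I_j=I_{i_{j-1},i_j}$. Part (i) of the observation following the definition of the slope gives $\nu(f_i)-i\alpha_j\geq \nu(f_{i_j})-i_j\alpha_j = \nu(f_{i_{j-1}})-i_{j-1}\alpha_j$ for every $i$, so $\nu_q(f)$ is attained at both $i_{j-1}$ and $i_j$; hence $\{i_{j-1},i_j\}\subseteq S_q(f)$ and $S_q(f)$ is not a singleton.

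For the direction ($\Rightarrow$), I argue the contrapositive: if $\alpha\notin\{\alpha_1,\ldots,\alpha_s\}$, then $|S_q(f)|=1$. I split into three cases according to where $\alpha$ falls relative to the strictly increasing sequence $\alpha_1<\cdots<\alpha_s$. If $\alpha<\alpha_1$, I apply part (i) of the observation to $I_1$ and exploit the positive gap $\alpha_1-\alpha$ to obtain $\nu(f_i)-i\alpha>\nu(f_0)$ for every $i\geq 1$, so that the minimum is uniquely attained at $i=0$. The case $\alpha>\alpha_s$ is symmetric, with the unique minimum at $i=r$. If $\alpha_j<\alpha<\alpha_{j+1}$ for some $1\leq j\leq s-1$, I compare against $\alpha_{j+1}$ via part (i) of the observation applied to $I_{j+1}$ to handle indices $i>i_j$, and against $\alpha_j$ via part (i) applied to $I_j$ to handle indices $i<i_j$; in both sub-cases the minimum is uniquely attained at $i_j$.

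The only real subtlety is this last case when $i<i_j$: the factor $i-i_j$ is negative, so multiplying the strict inequality $\alpha>\alpha_j$ by $i-i_j$ reverses the sign, yielding $(i-i_j)\alpha<(i-i_j)\alpha_j\leq \nu(f_i)-\nu(f_{i_j})$, which rearranges to the desired $\nu(f_i)-i\alpha>\nu(f_{i_j})-i_j\alpha$. Keeping track of this sign flip and choosing the correct side of the Newton polygon to compare with is the main piece of bookkeeping; everything else is a direct application of the geometry encoded in Lemma \ref{Lemaquefazfuncioan}.
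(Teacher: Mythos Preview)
Your proof is correct. For the ($\Leftarrow$) direction your argument is essentially identical to the paper's. For the ($\Rightarrow$) direction, however, the paper proceeds directly rather than by contrapositive: starting from two distinct indices $i,j\in S_q(f)$, it observes via Lemma~\ref{Lemaquefazfuncioan} that $L_{\alpha,P_i}(0)=L_{\alpha,P_j}(0)$, so the lines of slope $\alpha$ through $P_i$ and $P_j$ coincide; since $i,j$ both realize the minimum of $\nu(f_kq^k)$, every other $P_k$ lies on or above this common line, making it a supporting line of $\Delta_q(f)$ from below and hence forcing $\alpha$ to be the slope of one of its sides. Your contrapositive with the three-way case split on the position of $\alpha$ relative to $\alpha_1<\cdots<\alpha_s$ is entirely sound and has the virtue of making every inequality explicit---in particular it identifies the unique minimizing index in each case---but it is longer. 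The paper's direct approach is more economical at the price of leaving the final step (``the line through $P_i,P_j$ of slope $\alpha$ lies below all vertices, hence is a side'') to the reader.
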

\begin{proof}
Assume that $S_q(f)$ is not a singleton and take $i,j\in S_q(f)$ with $j\neq i$. This means that
\[
L_{\alpha,P_i}(0)=\nu(f_iq^i)=\nu(f_jq^j)=L_{\alpha,P_j}(0).
\]
Hence, the lines passing through $(i,\nu(f_i))$ e $(j,\nu(f_j))$ with slope $\alpha$ coincide. Since $i,j\in S_q(f)$ this implies that $-\nu(q)$ is the slope of a side of $\Delta_q(f)$.

If $\alpha$ is the slope of a side of $\Delta_q(f)$, then for some $j$, $1\leq j\leq s$, we have that $\alpha$ is the slope of $I_j$. Moreover, by definition of $\Delta_q(f)$ we have that
\[
\nu\left(f_{i_j}q^{i_j}\right)=\nu\left(f_{i_{j-1}}q^{i_{j-1}}\right)\leq \nu\left(f_iq^i\right)\mbox{ for every }i, 0\leq i\leq r.
\]
Hence, $S_q(f)$ is not a singleton.
\end{proof}

\begin{Prop}\label{Popdiscumath}
If $\nu_q(f)<\nu(f)$, then $-\nu(q)$ is equal to the slope of a side of $\Delta_q(f)$.
\end{Prop}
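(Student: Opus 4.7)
The plan is to prove this by contrapositive, reducing to Lemma \ref{singl}. More precisely, I would show that if $\nu_q(f) < \nu(f)$, then $S_q(f)$ cannot be a singleton, after which Lemma \ref{singl} immediately gives that $-\nu(q)$ is the slope of a side of $\Delta_q(f)$.

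For the reduction itself, I would invoke the basic strict-triangle-inequality behavior of valuations. Recall that in a valuation, if $a = b_1 + \ldots + b_m$ and exactly one of the $b_j$ attains the minimum of $\{\nu(b_j)\}$, then $\nu(a) = \min_j \nu(b_j)$. Applying this to the $q$-expansion $f = f_0 + f_1 q + \ldots + f_r q^r$, if $S_q(f) = \{i_0\}$ were a singleton, then there would be a unique index $i_0$ with $\nu(f_{i_0} q^{i_0}) = \nu_q(f)$, and all other terms $f_i q^i$ would satisfy $\nu(f_i q^i) > \nu_q(f)$. Consequently,
\[
\nu(f) = \nu\!\left(\sum_{i=0}^r f_i q^i\right) = \nu(f_{i_0} q^{i_0}) = \nu_q(f),
\]
contradicting the hypothesis $\nu_q(f) < \nu(f)$.

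Therefore $S_q(f)$ contains at least two indices, and by Lemma \ref{singl} the value $-\nu(q)$ is the slope of a side of $\Delta_q(f)$, which is the desired conclusion.

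There is no real obstacle here: the only subtlety is making sure we are allowed to invoke the singleton-minimum principle, which follows at once from axiom (V2) together with (V1) applied to the expansion of $f$ in powers of $q$. The geometric content (that $-\nu(q)$ being a slope amounts to two monomials $f_iq^i$ sharing the same value) is entirely packaged in Lemma \ref{singl}, so the present proposition is essentially a two-line consequence once that lemma is in place.
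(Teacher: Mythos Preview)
Your proof is correct and follows essentially the same approach as the paper: the paper's proof also observes that if $S_q(f)$ is a singleton then $\nu_q(f)=\nu(f)$, and then invokes Lemma~\ref{singl}. You have merely spelled out the strict-triangle-inequality justification for that first step more explicitly than the paper does.
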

\begin{proof}
If $S_q(f)$ is a singleton, then $\nu_q(f)=\nu(f)$. By Lemma \ref{singl} we conclude that $-\nu(q)$ is equal to the slope of a side of $\Delta_q(f)$.
\end{proof}

\begin{Prop}\label{propfrelagradeainvandponl}
Take $\nu\in\mathcal V$ and assume that ${\rm KP}(\nu)\neq \emptyset$. Take $Q\in {\rm KP}(\nu)$ of smallest degree. For any $f\in K[x]$ denote by $\alpha_f$ the slope of the first side of $\Delta_{Q,\nu}(f)$. Then $\inv_{\nu}(f)\in \mathcal U_\nu$ if and only if $\nu(Q)>-\alpha_f$. 
\end{Prop}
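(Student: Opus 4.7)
The plan is to reduce the biconditional to a combinatorial condition on the support set $S_{Q,\nu}(f)$ and then to read that condition off the Newton polygon $\Delta_{Q,\nu}(f)$. Throughout I work with the $Q$-expansion $f=f_0+f_1Q+\cdots+f_rQ^r$ (with $\deg(f_i)<\deg(Q)$) and invoke three classical Mac Lane--Vaqui\'e structural facts for a key polynomial of minimal degree (cf.\ \cite{Mac_1}, \cite{Nart1}). The first is that $\nu(f)=\nu_Q(f)=\min_i\{\nu(f_iQ^i)\}$, so that in $\mathcal G_\nu$
\[
\inv_\nu(f)=\sum_{i\in S_{Q,\nu}(f)}\inv_\nu(f_i)\,\inv_\nu(Q)^i.
\]
The second is that $\inv_\nu(h)\in\mathcal U_\nu$ whenever $h\in K[x]$ has $\deg(h)<\deg(Q)$ and $h\notin\SU(\nu)$: by Theorem~6.8 of \cite{Nart1}, any homogeneous irreducible divisor of $\inv_\nu(h)$ would be $\inv_\nu(Q')$ (up to a unit) for some $Q'\in{\rm KP}(\nu)$, and the defining property of $Q'$ would force $\deg(Q)\leq\deg(Q')\leq\deg(h)<\deg(Q)$. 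The third is the polynomial-ring description $\mathcal G_\nu\cong\Delta_0[\inv_\nu(Q)]$, where $\Delta_0$ is the graded subring generated by the $\inv_\nu(h)$ with $\deg(h)<\deg(Q)$; combined with the second fact this makes $\Delta_0$ a graded field, and the units of $\mathcal G_\nu$ then coincide with the nonzero homogeneous elements of $\Delta_0$.

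The key step is the equivalence
\[
\inv_\nu(f)\in\mathcal U_\nu \ \Llr\ S_{Q,\nu}(f)=\{0\}.
\]
The direction $\Leftarrow$ collapses the displayed sum to $\inv_\nu(f_0)\in\mathcal U_\nu$. For $\Rightarrow$, if $0\notin S_{Q,\nu}(f)$ then every summand is divisible by $\inv_\nu(Q)$ and hence so is $\inv_\nu(f)$, which cannot be a unit because $\inv_\nu(Q)$ is prime. If instead $S_{Q,\nu}(f)\supsetneq\{0\}$ contains some $i\geq 1$, then writing $\inv_\nu(f)=\sum_{i\in S_{Q,\nu}(f)}a_i T^i$ with $T=\inv_\nu(Q)$ and $a_i\in\Delta_0$ presents $\inv_\nu(f)$ as a polynomial of positive $T$-degree over $\Delta_0$, so by the third structural fact it cannot be a unit of $\mathcal G_\nu$.

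To conclude, I translate $S_{Q,\nu}(f)=\{0\}$ into $\nu(Q)>-\alpha_f$ by a direct Newton-polygon computation. If $(i_1,\nu(f_{i_1}))$ denotes the second vertex of $\Delta_{Q,\nu}(f)$, then $\alpha_f=(\nu(f_{i_1})-\nu(f_0))/i_1$, and the condition $S_{Q,\nu}(f)=\{0\}$ amounts to $\nu(f_0)<\nu(f_j)+j\nu(Q)$ for every $j\geq 1$ with $\nu(f_j)<\infty$. Since each $(j,\nu(f_j))$ sits on or above the polygon and the polygon leaves $(0,\nu(f_0))$ with minimal slope $\alpha_f$, the strictest of these inequalities is the one at $j=i_1$, which rearranges exactly to $\alpha_f>-\nu(Q)$. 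The main obstacle is the third structural fact: it is what settles the borderline case $\nu(Q)=-\alpha_f$, in which $\inv_\nu(Q)\nmid\inv_\nu(f)$ yet Lemma~\ref{singl} still gives $|S_{Q,\nu}(f)|\geq 2$, and one must nevertheless conclude $\inv_\nu(f)\notin\mathcal U_\nu$; all the remaining arguments are elementary bookkeeping with the Newton polygon.
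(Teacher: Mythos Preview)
Your proof is correct and follows essentially the same route as the paper's. Both arguments pivot on the intermediate equivalence $\inv_\nu(f)\in\mathcal U_\nu\iff S_{Q,\nu}(f)=\{0\}$ (equivalently, $\nu(f-f_0)>\nu(f)$) and then translate this into the Newton-polygon condition $\nu(Q)>-\alpha_f$. The only difference is that the paper outsources the unit characterization to Proposition~3.5 of \cite{Nart1}, whereas you reprove it from the polynomial-ring description $\mathcal G_\nu\cong\Delta_0[\inv_\nu(Q)]$; your version is thus more self-contained but not a different strategy.
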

\begin{proof}
Let $f=f_0+\ldots+f_rQ^r$ be the $Q$-expansion of $f$. By Lemma \ref{Lemaquefazfuncioan} and Proposition 2.3 of \cite{Nart1}, we have that $\nu(Q)>-\alpha_f$ if and only if $\nu(f-f_0)>\nu(f)$. The result follows from Proposition 3.5 of \cite{Nart1}.
\end{proof}
\section{Proofs of the main results}\label{proofofthem}
We will need the following lemma.
\begin{Lema}\label{lemausadoduasvezes}
Assume that $\mathfrak{v}=\{\nu_i\}_{i\in I}\subseteq \mathcal V$ is a totally ordered set, without last element, such that $\{\nu_i(F)\}_{i\in I}$ is strictly increasing. Then there exists $\nu\in\mathcal V$ such that the following are satisfied.
\begin{description}
\item[(i)] $\nu_i<\nu$ and $\nu_i(F)<\nu(F)$ for every $i\in I$.
\item[(ii)] Either $F\in \SU(\nu)$ or $\inv_\nu(F)\notin \mathcal U_\nu$.
\end{description}
\end{Lema}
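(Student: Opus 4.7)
The plan is to construct $\nu$ as an augmentation of $\mathfrak v$ at a carefully chosen polynomial $Q$ and slope $\gamma$. Since $\{\nu_i(F)\}_{i\in I}$ is strictly increasing, $F$ is not $\mathfrak v$-stable, so there exists a monic polynomial $Q$ of smallest degree that is not $\mathfrak v$-stable, necessarily with $\deg Q\leq \deg F$; by Corollary \ref{Corpahultim}, $\{\nu_i(Q)\}_{i\in I}$ is then also strictly increasing. In the $Q$-expansion $F = F_0 + F_1Q + \cdots + F_s Q^s$, each coefficient $F_j$ has degree less than $\deg Q$ and is therefore $\mathfrak v$-stable; set $\beta_j:=\mathfrak v(F_j)$ whenever $F_j\neq 0$. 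If $F_0=0$, then $Q\mid F$ and irreducibility of $F$ forces $F$ and $Q$ to be associates; taking $\gamma=\infty$ and $\mu=[\mathfrak v;\mu(Q)=\gamma]$ via Theorem \ref{themgenraliaugme} immediately puts $F\in\SU(\mu)$, so both (i) and (ii) hold. Assume henceforth $F_0\neq 0$, so that the Newton polygon $\Delta_{Q,\mathfrak v}(F)$ is well-defined; let $\alpha_F$ denote the slope of its first side.

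The decisive step is to prove that $\nu_i(Q)<-\alpha_F$ for every $i\in I$. Suppose instead $\nu_{i_0}(Q)\geq -\alpha_F$ for some $i_0$; by strict monotonicity, $\nu_i(Q) > -\alpha_F$ for every $i>i_0$. For $i$ large enough that also $\nu_i(F_j)=\beta_j$ for each $j$, the upper convexity of $\Delta_{Q,\mathfrak v}(F)$ gives $\beta_j\geq \beta_0 + j\alpha_F$ for every $j\geq 1$ with $F_j\neq 0$, so that
\[
\beta_j + j\nu_i(Q) \geq \beta_0 + j\bigl(\alpha_F+\nu_i(Q)\bigr) > \beta_0
\]
for all such $j$. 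Hence $\nu_i(F-F_0)\geq \min_{j\geq 1}\{\beta_j+j\nu_i(Q)\} > \beta_0 = \nu_i(F_0)$, and by the ultrametric law $\nu_i(F)=\beta_0$. Since this holds for all sufficiently large $i$, $\{\nu_i(F)\}$ becomes constant at $\beta_0$, contradicting strict monotonicity. A parallel argument, using that $\{\nu_i(Q)\}$ itself is strictly increasing, rules out the boundary case $\nu_i(Q)=-\alpha_F$.

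Setting $\gamma:=-\alpha_F$, the claim ensures $\gamma>\nu_i(Q)$ for all $i$, so Theorem \ref{themgenraliaugme} yields the valuation $\mu:=[\mathfrak v;\mu(Q)=\gamma]$ with $\nu_i<\mu$ for every $i$. Since $\deg F_j<\deg Q$ forces $\mu(F_j)=\mathfrak v(F_j)=\beta_j$, the polygons $\Delta_{Q,\mu}(F)$ and $\Delta_{Q,\mathfrak v}(F)$ coincide, and $\mu(F)=\min_j\{\beta_j+j\gamma\}=\beta_0$, attained at both $j=0$ and the endpoint $j_1$ of the first side. Combined with $\nu_i\leq \mu$ and the strict monotonicity of $\{\nu_i(F)\}$, this gives $\nu_i(F)<\beta_0=\mu(F)$, establishing (i). For (ii), the first slope of $\Delta_{Q,\mu}(F)$ is again $\alpha_F$, and $\mu(Q)=-\alpha_F$ fails the strict inequality $\mu(Q)>-\alpha_F$ of Proposition \ref{propfrelagradeainvandponl}; hence $\inv_\mu(F)\notin \mathcal U_\mu$.

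The hard part is the Newton-polygon dichotomy: converting the geometric statement that $\min_j\{\beta_j+j\gamma\}$ is uniquely attained at $j=0$ when $\gamma>-\alpha_F$ into the rigid identity $\nu_i(F)=\beta_0$ for large $i$ relies crucially on the ultrametric law with a strictly unique minimum, which in turn is where the hypothesis of strict monotonicity of $\{\nu_i(F)\}$ is essentially used. A secondary technicality, for (ii), is the invocation of Proposition \ref{propfrelagradeainvandponl}, which requires $Q$ to be a smallest-degree Mac Lane--Vaqui\'e key polynomial for $\mu$; this is standard for augmentations of the form $[\mathfrak v;\mu(Q)=\gamma]$, since any polynomial of degree less than $\deg Q$ has $\inv_\mu$ a unit in $\mathcal G_\mu$, but it warrants a brief verification.
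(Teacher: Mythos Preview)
Your argument is correct and follows essentially the same strategy as the paper's: take $Q$ monic of smallest degree among non-$\mathfrak v$-stable polynomials and build the desired valuation as a limit augmentation $[\mathfrak v;\mu(Q)=\gamma]$ for a suitable $\gamma$. The only notable difference is the choice of $\gamma$. The paper takes $\beta=(\mathfrak v(F_0)-\mathfrak v(F_l))/l$, where $l>0$ is the index at which $\min_k\nu_i(F_kQ^k)$ is eventually (uniquely) attained, whereas you take $\gamma=-\alpha_F$, the negative of the first Newton-polygon slope; by convexity one always has $\beta\leq -\alpha_F$, and either choice gives $\gamma>\nu_i(Q)$ together with $\mu(F_0)=\mu(F_{j}Q^{j})$ for some $j>0$, whence $\inv_\mu(F)\notin\mathcal U_\mu$ via Proposition~3.5 of \cite{Nart1} (which underlies your Proposition~\ref{propfrelagradeainvandponl}). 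Your contradiction argument that $\nu_i(Q)<-\alpha_F$ is precisely the Newton-polygon reformulation of the paper's observation that the eventual minimizing index $l$ must be positive. Both proofs tacitly use that $Q$ is a smallest-degree key polynomial for the constructed valuation and that $\nu_i<\mu$; these are standard for limit augmentations, and you are right to flag the former as deserving a line of justification.
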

\begin{proof}
If $g\in K[x]$ is $\mathfrak v$-stable, then we set $\nu(g):=\mathfrak v(g)$. By Corollary \ref{Corpahultim}, for every $g\in K[x]$ we have that either $\{\nu_i(g)\}_{i\in I}$ is increasing or $g$ is $\mathfrak v$-stable.

If every polynomial of degree smaller than $\deg(F)$ is $\mathfrak{v}$-stable, then we take $\alpha\in \Gamma_\infty$ with $\alpha>\nu_i(F)$ for every $i\in I$, and set
\[
\nu=[\{\nu_i\}_{i\in I};\nu(F)=\alpha].
\]
It is obvious that $\nu$ satisfies \textbf{(i)}. Also, if $\alpha<\infty$, then $F$ is a Mac Lane- Vaqui\'e key polynomial for $\nu$. In particular, $\inv_\nu(F)$ is not a unit in $\mathcal G_\nu$.

Suppose now that there exists a polynomial $Q$ such that $\{\nu_i(Q)\}_{i\in I}$ is increasing and $\deg(Q)<\deg(F)$. We can take $Q$ to be monic and of smallest degree among polynomials $Q'$ with $\{\nu_i(Q')\}_{i\in I}$ increasing. Let $F=f_0+f_1Q+\ldots+f_rQ^r$ be the $Q$-expansion of $F$. For every $l$, $0\leq l\leq r$, since $\deg(f_l)<\deg(Q)$, we have that $f_l$ is $\mathfrak v$-stable. Hence, there exists $l$, $0\leq l\leq r$, such that $\nu_i(f_lQ^l)<\nu_i(f_kQ^k)$ for $k\neq l$ and $i$ large enough. Then,
\[
\nu_i(F)=\nu_i(f_lQ^l)=\nu(f_l)+l\nu_i(Q)\mbox{ for }i\mbox{ large enough}.
\]
Since $\nu_i(F)$ is increasing we must have $l>0$ and $\nu(f_0)>\nu_i(F)$ for every $i\in I$. Then
\[
\beta:=\dfrac{\nu(f_0)-\nu(f_l)}{l}>\nu_i(Q)\mbox{ for every }i\in I.
\]
Set $\nu=[\{\nu_i\}_{i\in I};\nu(Q)=\beta]$. We have that $\nu$ is a valuation and
\begin{equation}\label{equaquefazfuncipnal}
\nu(f_0)=\nu(f_lQ^l). 
\end{equation}
Then
\[
\nu(F)=\min_{1\leq j\leq r}\left\{\nu(f_j)+j\beta\right\}>\min_{1\leq j\leq r}\{\nu(f_j)+j\nu_i(Q)\}\geq \nu(f_l)+l\nu_i(Q)=\nu_i(F)
\]
for every $i\in I$. Moreover, it follows from \eqref{equaquefazfuncipnal} and Proposition 3.5 of \cite{Nart1} that $\inv_\nu(F)\notin \mathcal U_\nu$.
\end{proof}

\begin{proof}[Proof of Proposition \ref{proposparrame}] 

Take $Q\in{\rm KP}(\nu)$ such that $\phi=u\cdot \inv_\nu(Q)$ for some $u\in \mathcal U_\nu$.  If $\inv_\nu(F)=\phi^s\inv_\nu(G)$ ($\phi\nmid \inv_\nu(G)$), then $\nu(F-Q^s G)>\nu(Q^s G)=\nu(F)$. Take $\gamma\in \Gamma$ such that
\[
\nu(Q^sG)=\nu(F)<\gamma<\nu(F-Q^s G).
\]
Then we have that
\[
\alpha:=\frac{\gamma-\nu(G)}{s}>\nu(Q).
\]
Consider the valuation $\nu'=[\nu;\nu'(Q)=\alpha]$. Then
\begin{equation}\label{equatibolacha}
\nu'(F-Q^s G)\geq \nu(F-Q^s G)>\gamma=\nu'(Q^s G).
\end{equation}
Hence, $\nu'(F)=\nu'(Q^s G)=\gamma>\nu(F)$.

It follows from \eqref{equatibolacha} that $\inv_{\nu'}(Q)\mid\inv_{\nu'}(F)$ in $\mathcal G_{\nu'}$. This ends the proof.

\end{proof}
\begin{proof}[Proof of Theorem \ref{Propochata}]
Assume that $\nu\in\mathcal V_F$. By definition, there exists $\mu\in \mathcal V_{F_\infty}$ such that $\nu\leq\mu$. Since $\nu$ is non-maximal, we have $\nu<\mu$. Let $Q$ be a monic polynomial of smallest degree such that $\nu(Q)<\mu(Q)$. By Remark \ref{obsmaclane}, we have $Q\in {\rm KP}(\nu)$. If we set 
\[
\nu'=[\nu,\nu'(Q)=\mu(Q)],
\]
then $\nu<\nu'<\mu$. By Proposition \ref{propsobucara} we obtain that $\nu(F)<\nu'(F)$ and consequently by Theorem \ref{lemasobreexenso} we have $\inv_\nu(Q)\mid \inv_{\nu}(F)$. Since $\inv_\nu(Q)$ is prime we obtain that $\inv_\nu(F)\notin \mathcal U_\nu$.

For the converse, assume that $\inv_\nu(F)\notin \mathcal U_\nu$. Consider the set
\[
\mathcal S=\mathcal{V}_{F_\infty}\cup\{\nu'\in \mathcal V\mid \nu\leq \nu', F\notin \SU(\nu')\mbox{ and }\inv_{\nu'}(F)\notin\mathcal U_\nu\}.
\]
Define a partial order $\preceq$ in $\mathcal S$ by setting $\nu'\preceq \nu''$ if either $\nu'=\nu''$ or
\[
\nu'\leq \nu''\mbox{ and }\nu'(F)<\nu''(F).
\]
Since $\nu\in\mathcal S$ we have that $\mathcal S\neq\emptyset$. On the other hand, assume that $\{\nu_i\}_{i\in I}$ is a totally ordered subset of $\mathcal S$.  If $I$ has a maximum $i$, then $\nu_i\in\mathcal S$ is a majorant for $\{\nu_i\}_{i\in I}$. If $I$ does not have a maximum, then using Lemma \ref{lemausadoduasvezes} we obtain that $\{\nu_i\}_{i\in I}$ admits a majorant in $\mathcal S$. By Zorn's Lemma, $\mathcal S$ admits a maximal element $\mu$.

We claim that $\mu\in \mathcal{V}_{F_\infty}$. Indeed, if $\mu\notin \mathcal{V}_{F_\infty}$, then $F\notin\SU(\nu)$ and $\inv_\mu(F)\notin \mathcal U_\mu$ (by the definition of $\mathcal S$). By Proposition \ref{proposparrame}, there exists $\mu'\in \mathcal S$ such that $\mu(F)<\mu'(F)$. This contradicts the maximality of $\mu$ in $\mathcal S$. Therefore, $\mu\in \mathcal{V}_{F_\infty}$ and consequently $\nu\in \mathcal V_F$.
\end{proof}

\begin{proof}[Proof of Theorem \ref{corusadoprafrente}]
For each homogeneous irreducible factor $\phi_i$ of $\inv_\nu(F)$ take $Q_i\in {\rm KP}(\nu)$ such that of $\phi_i=u_i\cdot\inv_\nu(Q_i)$ for some $u_i\in\mathcal U_\nu$. Let $\nu_i\in \mathcal V$ be the valuation given by Proposition \ref{proposparrame}. By Theorem \ref{Propochata}, $\nu_i\in \mathcal V_F$ for every $i$, $1\leq i\leq r$. Since $\nu_i\in \mathcal V_F$, there exists $\mu_i\in \mathcal V_{F_\infty}$ such that $\nu_i\leq\mu_i$.

For $i,j$, $1\leq i<j\leq n$, since $\inv_\nu(Q_i)=u_i^{-1}\cdot\phi_i\nmid u_j^{-1}\cdot\phi_j=\inv_\nu(Q_j)$, by Proposition \ref{propsobucara} and Theorem \ref{lemasobreexenso}, we have that
\[
\mu_i(Q_j)=\nu(Q_j)<\mu_j(Q_j).
\]
Therefore, these valuations are distinct.

\end{proof}

\begin{figure}[!htb]
\centering
\subfigure[$\Delta_{Q,\nu}(F)$]{
\begin{tikzpicture}[line cap=round,line join=round,>=triangle 45,x=0.8cm,y=0.8cm]
      \draw[->] (-0.2,0) -- (3.2,0) node[right] {$\Q$};
    \draw[->] (0,-0.7) -- (0,2.5) node[left] {$\Gamma_\Q$};

  \filldraw[fill=blue] (1,0.5) circle (2pt);
  \filldraw[fill=blue] (2,0) circle (2pt);
  \filldraw[fill=blue] (3,0.5) circle (2pt);
  \filldraw[fill=blue] (0,1.8) circle (2pt);
  
  \draw [line width=1.pt,color=blue] (0,1.8)-- (1,0.5);
  \draw [line width=1.pt,color=blue] (1,0.5)-- (2,0);
  \draw [line width=1.pt,color=blue] (2,0)-- (3,0.5);

  \draw [dashed] (0,1)-- (1,0.5);
   \draw [dashed] (0,2)-- (3,0.5);
   
   \draw [line width=1.pt,color=black] (-0.1,1)-- (0.1,1);
\end{tikzpicture}
}
\subfigure[$\Delta_{Q,\mu}(F)$]{
\begin{tikzpicture}[line cap=round,line join=round,>=triangle 45,x=0.8cm,y=0.8cm]
      \draw[->] (-0.2,0) -- (3.2,0) node[right] {$\Q$};
    \draw[->] (0,-0.7) -- (0,2.5) node[left] {$\Gamma_\Q$};

  \filldraw[fill=blue] (1,0.5) circle (2pt);
  \filldraw[fill=blue] (2,0) circle (2pt);
  \filldraw[fill=blue] (3,0.5) circle (2pt);
  \filldraw[fill=blue] (0,1.8) circle (2pt);
  
  \draw [line width=1.pt,color=blue] (0,1.8)-- (1,0.5);
  \draw [line width=1.pt,color=blue] (1,0.5)-- (2,0);
  \draw [line width=1.pt,color=blue] (2,0)-- (3,0.5);

  \draw [dashed] (0,1.8)-- (1.385,0);
  \draw [dashed] (0.46,2)-- (2,0);
  \draw [dashed] (1.84,2)-- (3,0.5);
  
\end{tikzpicture}
}
\subfigure[$\Delta_{Q,\mu'}(F)$]{
\begin{tikzpicture}[line cap=round,line join=round,>=triangle 45,x=0.8cm,y=0.8cm]
      \draw[->] (-0.2,0) -- (3.2,0) node[right] {$\Q$};
    \draw[->] (0,-0.7) -- (0,2.5) node[left] {$\Gamma_\Q$};

  \filldraw[fill=blue] (1,0.5) circle (2pt);
  \filldraw[fill=blue] (2,0) circle (2pt);
  \filldraw[fill=blue] (3,0.5) circle (2pt);
  \filldraw[fill=blue] (0,1.8) circle (2pt);
  
  \draw [line width=1.pt,color=blue] (0,1.8)-- (1,0.5);
  \draw [line width=1.pt,color=blue] (1,0.5)-- (2,0);
  \draw [line width=1.pt,color=blue] (2,0)-- (3,0.5);

  \draw [dashed] (0,1.8)-- (1,0);
  \draw [dashed] (0,2.3)-- (1,0.5);
  \draw [dashed] (2,2.3)-- (3,0.5);
  \draw [dashed] (0.72,2.3)-- (2,0);
\end{tikzpicture}
}
\caption{Main step in the proof of Theorem \ref{mainthem}}\label{PolNewtonthem2}
\end{figure}
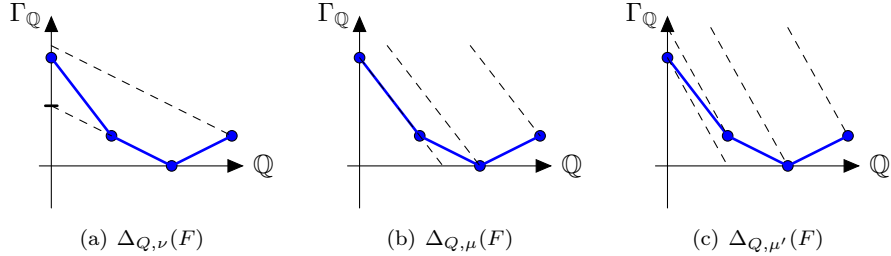

\begin{proof}[Proof of Theorem \ref{mainthem}]
Let $\phi$ be an irreducible factor of $\inv_\nu(F)$ and take $Q\in {\rm KP}(\nu)$ such that $\phi=u\cdot \inv_\nu(Q)$ for some $u\in \mathcal U_\nu$. By the proof of Proposition \ref{proposparrame}, there exists $\alpha'\in \Gamma$, $\alpha'>\nu(Q)$, such that for $\nu':=[\nu;\nu'(Q)=\alpha']$ we have
\[
\nu(Q)<\nu'(Q), \ \nu(F)<\nu'(F)\mbox{ and }\inv_{\nu'}(Q)\mid \inv_{\nu'}(F).
\]
For each $\alpha>\nu(Q)$, set $\nu_{\alpha}=[\nu;\nu_{\alpha}(Q)=\alpha]$. By definition, we have
\[
\Delta_{Q,\nu_\alpha}(F)=\Delta_{Q,\nu}(F)=\Delta_{Q,\nu'}(F).
\]
Let $\alpha_1$ be the slope of the first face of the Newton polygon $\Delta_{Q,\nu}(F)$. For each $\alpha\in\Gamma$, $\nu(Q)<\alpha\leq -\alpha_1$, by Proposition \ref{propfrelagradeainvandponl} we have $\inv_{\nu_\alpha}(F)\notin \mathcal U_{\nu_\alpha}$. Hence, we can apply Theorem \ref{Propochata} to obtain that $\nu_{\alpha}\in \mathcal V_F$. 

Set $\mu:=[\nu;\mu(Q)=-\alpha_1]$. By the above discussion, $\mu\in\mathcal V_F$. Take $\mu'\in \mathcal V$ such that $\mu\leq \mu'$ and $\mu(Q)<\mu'(Q)$. We will show that $\mu'\notin \mathcal V_F$. Indeed, since $\nu<\mu< \mu'$ we can apply Proposition \ref{propsobucara} to obtain that $\mu(a)=\mu'(a)$ for every $a\in K[x]$ with $\deg(a)<\deg(Q)$. Hence, the Newton polygons $\Delta_{Q,\mu}(F)$ and $\Delta_{Q,\mu'}(F)$ are the same. Since $\mu'(Q)>-\alpha_1$ we apply Proposition \ref{propfrelagradeainvandponl} to obtain that $\inv_{\mu'}(F)\in \mathcal U_{\mu'}$. By Theorem \ref{Propochata} we conclude that $\mu'\notin \mathcal V_F$.
\end{proof}

\section{Application to Artin-Schreier extensions}\label{ASECaso}
We start this section by presenting a result that will be essential for this section. Take $\nu\in \mathcal V$ and set $\nu_0=\nu\mid_{K}$. For every non-constant polynomial $q\in K[x]$ we consider the map
\[
\nu_{q}(f)=\min_{0\leq i\leq r}\{\nu(f_iq^i)\}
\]
where $f=f_0+\ldots+f_rq^r$ is the $q$-expansion of $f$. The map $\nu_q$ is called the \textbf{$q$-truncation of $\nu$}.

Let $p$ the characteristic exponent of $(K,\nu_0)$ (i.e., $p=1$ if ${\rm char}(K\nu_0)=0$ and $p={\rm char}(K\nu_0)$ if ${\rm char}(K\nu_0)>0$).
Set
\[
\Psi_1:=\{\nu(x-a)\mid a\in K\}
\]
and
\[
\mathcal K_1:=\{f\in K[x]\mid \nu_{x-a}(f)<\nu(f)\mbox{ for every }a\in K\}.
\]
\begin{Prop}\label{Cordahora}
Assume that $\nu$ is a rank one valuation, $\Psi_1$ is bounded in $\Gamma$ and does not have a largest element. Then, for every $G\in K[x]$ with $\deg(G)<p$ there exists $b\in K$ such that
\[
\nu(G)=\nu_{x-b'}(G)\mbox{ for every }b'\in K\mbox{ with }\nu(x-b')>\nu(x-b).
\]
\end{Prop}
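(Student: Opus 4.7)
The plan is induction on $r = \deg G$, with $r<p$. The base case $r=0$ is immediate, since a constant $G$ satisfies $\nu_{x-b'}(G) = \nu(G)$ for every $b'\in K$, so any $b\in K$ works.

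For the inductive step, let $G$ have degree $r$ with $1\leq r<p$, and assume the statement for all polynomials of degree strictly less than $r$. Because $r<p$, the integers $1!,2!,\ldots,r!$ are invertible in $K$, so the polynomials $h_i := G^{(i)}/i!\in K[x]$ are well defined with $\deg h_i = r-i$, and Taylor's formula gives, for every $b'\in K$,
\[
G(x) \;=\; \sum_{i=0}^{r} h_i(b')(x-b')^i.
\]
For each $i\geq 1$, $\deg h_i <r$, so the inductive hypothesis produces $c_i\in K$ with $\nu_{x-b'}(h_i) = \nu(h_i) =: \beta_i$ whenever $\nu(x-b') > \nu(x-c_i)$. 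Reading off the constant term of the $(x-b')$-expansion of $h_i$ yields $\nu(h_i(b'))\geq \beta_i$ for such $b'$.

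Set $\beta_0:=\nu(G)$. Using that $\Psi_1$ has no maximum, I aim to choose $b\in K$ with $\nu(x-b)$ large enough that, for every $i\in\{1,\ldots,r\}$, both $\nu(x-b) > \nu(x-c_i)$ and $\beta_i + i\nu(x-b) > \beta_0$ hold. For any $b'$ with $\nu(x-b')>\nu(x-b)$ the previous inequalities give $\nu(h_i(b'))+i\nu(x-b') > \beta_0$ for all $i\geq 1$, hence
\[
\nu(G-G(b')) \;=\; \nu\!\left(\sum_{i\geq 1} h_i(b')(x-b')^i\right) \;\geq\; \min_{i\geq 1}\bigl\{\nu(h_i(b'))+i\nu(x-b')\bigr\} \;>\; \beta_0 \;=\; \nu(G).
\]
Combined with the identity $G(b') = G - (G-G(b'))$, the ultrametric inequality then forces $\nu(G(b')) = \nu(G)$, and therefore
\[
\nu_{x-b'}(G) \;=\; \min\bigl\{\nu(G(b')),\; \nu(h_i(b'))+i\nu(x-b') : 1\leq i\leq r\bigr\} \;=\; \nu(G),
\]
completing the inductive step.

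The main obstacle is justifying the second half of the choice of $b$, namely that $\sup\Psi_1 > (\beta_0-\beta_i)/i$ holds for every $i\in\{1,\ldots,r\}$. This is the point where the bound $r<p$ must enter essentially, since in characteristic $p$ the kind of ``limit'' obstruction that would violate such an inequality only appears at degrees that are powers of $p$. I would rule out the opposite inequality by contradiction via a Newton polygon argument on $\Delta_{x-a,\nu}(G)$ for $a\in K$ with $\nu(x-a)$ approaching $\sup\Psi_1$: if $(\beta_0-\beta_{i_0})/i_0\geq\sup\Psi_1$ for some $i_0\geq 1$, the first side of this polygon would have slope bounded away from $0$ uniformly in $a$, which in combination with the inductive control on $\nu(h_{i_0}(b'))$ and Proposition \ref{propfrelagradeainvandponl} produces a polynomial in $\mathcal K_1$ of degree strictly smaller than $p$, contradicting the inductive setup.
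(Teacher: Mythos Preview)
Your inductive framework is clean and the reduction to the inequality $\sup\Psi_1 > (\beta_0-\beta_i)/i$ for all $1\leq i\leq r$ is correct: once that inequality holds you do get $\nu(G-G(b'))>\nu(G)$ and hence $\nu_{x-b'}(G)=\nu(G)$ for $b'$ large. The problem is that this inequality is not a technical detail but the entire content of the proposition, and your sketch for it is circular. If the inequality fails at some $i_0$, what your Newton polygon argument actually shows (using the inductive control $\nu(h_{i_0}(b'))=\beta_{i_0}$ for $b'$ large) is that $\nu_{x-b'}(G)\leq \beta_{i_0}+i_0\nu(x-b')<\nu(G)$ for every $b'$, i.e.\ $G\in\mathcal K_1$. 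But $G$ has degree $r$, and your inductive hypothesis only rules out elements of $\mathcal K_1$ of degree strictly less than $r$; you have not produced anything of smaller degree, so there is no contradiction with ``the inductive setup.'' Saying ``degree strictly smaller than $p$'' does not help, since that is precisely the conclusion you are after.

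The paper avoids this circle by citing Theorem~1.1 of \cite{michael}, which asserts that the minimal degree of a polynomial in $\mathcal K_1$ is a positive power of $p$; this is a genuine theorem whose proof requires an argument of Kaplansky type (analysing $\nu(Q(a_{n+1})-Q(a_n))$ along a pseudo-Cauchy sequence and using that $Q'\neq 0$ when $\deg Q<p$). Once one knows this, any $G$ with $\deg G<p$ lies outside $\mathcal K_1$, so some $b$ satisfies $\nu_{x-b}(G)=\nu(G)$, and Proposition~\ref{propsobucara} propagates this to all larger $b'$. If you want a self-contained argument, you need to supply that Kaplansky-style step rather than appeal back to the statement being proved.
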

\begin{proof}
By Theorem 1.1 of \cite{michael} applied to $n=1$ we conclude that the least degree of a polynomial $f\in K[x]$ for which $f\in \mathcal K_1$ is a positive power of $p$. Hence, the result follows.
\end{proof}

From now on, assume that $(K,\nu_0)$ is a valued field and ${\rm char}(K)=p>0$. Let $F\in K[x]$ be an irreducible Artin-Schreier polynomial, i.e., $F(x)=x^p-x-a$ with $a\in K$. We want to study all possible extensions of $\nu_0$ to $L:=K[x]/(F)$. We will identify these valuations with valuations $\nu$ on $K[x]$ for which $\SU(\nu)=(F)$. In this case, for $g\in K[x]$ with $\deg(g)<p$, if $\nu(g)\geq 0$, we will refer to the residue of $g$ as the residue of $g+(F)$ in $L\nu$.

We observe that if $\nu_1,\ldots,\nu_g$ are all the extensions of $\nu_0$ to $L$, then the fundamental inequality (Theorem 3.3.4 of \cite{engler}) gives us
\begin{equation}\label{fundmeineqi}
p=[L:K]\geq \sum_{i=1}^ge_if_i
\end{equation}
where $e_i$ is the ramification index and $f_i$ the inertia degree of the extension $(L|K,\nu_i)$. In particular, the maximum number of extensions of $\nu_0$ to $L$ is $p$. If there is a unique extension $\nu$ of $\nu_0$ to $L$, then we define the defect of $(L|K,\nu)$ as
\[
d=\frac{[L:K]}{ef},
\]
where $e$ and $f$ are the ramification index and inertia degree of $(L|K,\nu)$, respectively. Then
\[
p=[L:K]=def.
\]

\begin{Prop}\label{Corolsobreasc}
Let $(K,\nu_0)$ be a valued field with ${ \rm char}(K)=p>0$. Let $F(x)=x^p-x-a\in K[x]$ be an irreducible Artin-Schreier polynomial and $L=K[x]/(F)$. If there exists $b\in K$ such that $\nu_0(F(b))>0$, then there exist $p$ many distinct extensions of $\nu_0$ to $L$.
\end{Prop}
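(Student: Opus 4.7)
The plan is to apply Theorem \ref{corusadoprafrente} to the $(x-b)$-Gauss extension of $\nu_0$. Concretely, I will define $\nu \in \mathcal V$ by the rule $\nu(g) = \min_j \nu_0(g_j)$, where $g = \sum_j g_j(x-b)^j$ is the $(x-b)$-expansion of $g \in K[x]$. Because ${\rm char}(K) = p$ we have $(x-b)^p = x^p - b^p$, so the $(x-b)$-expansion of $F$ collapses to
\[
F(x) = (x-b)^p - (x-b) + F(b),
\]
and its three nonzero coefficients have $\nu_0$-values $0$, $0$, and $\nu_0(F(b)) > 0$. In particular $\nu(F) = 0$, hence $F \notin \SU(\nu)$.

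Setting $Y := \inv_\nu(x-b)$, the standard description of the graded ring of a Gauss extension identifies the degree-$0$ component of $\mathcal G_\nu$ with the polynomial ring $K\nu_0[Y]$, with $Y$ transcendental over $K\nu_0$. Since the constant term $F(b)$ has positive $\nu_0$-value, it does not contribute in degree $0$, and therefore
\[
\inv_\nu(F) = Y^p - Y = \prod_{i=0}^{p-1}(Y - i).
\]
These $p$ linear factors are pairwise non-associate in $\mathcal G_\nu$: any homogeneous unit of degree $0$ lies in $K\nu_0^\times$, and the residues $0, 1, \ldots, p-1$ coming from $\mathbb F_p \subseteq K \subseteq K\nu_0$ are pairwise distinct. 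In particular $\inv_\nu(F) \notin \mathcal U_\nu$, and $\nu$ is non-maximal (any augmentation $[\nu;\,\mu(x-b) = \gamma]$ with $\gamma > 0$ strictly dominates $\nu$), so Theorem \ref{Propochata} places $\nu$ in $\mathcal V_F$.

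Applying Theorem \ref{corusadoprafrente} to the displayed factorization then produces $p$ pairwise distinct valuations $\mu_0, \ldots, \mu_{p-1} \in \mathcal V_{F_\infty}$ with $\nu < \mu_i$, and each $\mu_i$ is by definition an extension of $\nu_0$ to $L = K[x]/(F)$. The main obstacle I anticipate is bookkeeping rather than conceptual: verifying that the degree-$0$ component of $\mathcal G_\nu$ really is $K\nu_0[Y]$ and that the factors $(Y - i)$ are pairwise non-associate amounts to unwinding the standard graded-ring description of a Gauss valuation. Once that is in place, the whole argument rests on the one-line computation $F(x) = (x-b)^p - (x-b) + F(b)$, which is precisely the characteristic-$p$ feature that forces the residual polynomial $Y^p - Y$ to split into $p$ distinct linear factors.
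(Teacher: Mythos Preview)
Your proof is correct and follows essentially the same route as the paper: both reduce to factoring $\inv_\nu(F)=\prod_{i=0}^{p-1}\inv_\nu(x-b-i)$ in the graded ring of the Gauss-type valuation centered at $b$ and then invoke Theorem~\ref{corusadoprafrente}. Your direct construction of the Gauss valuation is slightly cleaner than the paper's, which first fixes an extension of $\nu_0$ to $L$, passes to its $(x-b)$-truncation, and replaces $b$ by $b+1$ if necessary to force $\nu(x-b)=0$.
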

\begin{proof}
Take a valuation $\nu$ extending $\nu_0$ to $L$ and $b\in K$ such that $\nu_0(F(b))>0$. If we set $y=x-b$, then
\[
F=(y+b)^p-(y+b)-a=y^p-y+F(b).
\]
Since $\nu(F)=\infty$ and $\nu_0(F(b))>0$ we must have $\nu(y)\geq 0$. Moreover, we can suppose that $\nu(y)=0$. Indeed, if $\nu(y)>0$, then $\nu(y-1)=0$ and since $F(b)=F(b+1)$, we can replace $b$ by $b+1$ and $y$ by $y-1$.

Since $\nu(y)=0$ and $\nu_0(F(b))>0$ we have
\[
\inv_y(F)=\inv_y(y)^p-\inv_y(y)=\prod_{i=0}^{p-1}(\inv_y(y)-i)=\prod_{i=0}^{p-1}\inv_y(y-i).
\]
By Corollary \ref{corusadoprafrente}, there are (at least) $p$ many distinct valuations.
\end{proof}

\begin{figure}[!htb]
\centering
\subfigure[$\nu_0(F(b))>0$]{\label{v(a)>0}
\begin{tikzpicture}[line cap=round,line join=round,>=triangle 45,x=0.45cm,y=0.45cm]

\clip(-2.15,-1.5) rectangle (6.2,6.267584262100189);

\draw[->,color=black] (-1,0.) -- (6.15,0.);
\foreach \x in {.}
\draw[shift={(\x,0)},color=black] (0pt,2pt) -- (0pt,-2pt) node[below] {\footnotesize $\x$};

\draw[->,color=black] (0.,-1.5) -- (0.,3.1);
\foreach \y in {.}
\draw[shift={(0,\y)},color=black] (2pt,0pt) -- (-2pt,0pt) node[left] {\footnotesize $\y$};

\draw [line width=1.pt,color=blue] (0.,2.)-- (1.,0.);
\draw [line width=1.pt,color=blue] (1.,0.)-- (5.,0.);

\begin{scriptsize}
\draw [fill=blue] (0.,2.) circle (2pt);
\draw [fill=blue] (1.,0.) circle (2pt);
\draw [fill=blue] (5.,0.) circle (2pt);
\draw[color=black] (5.43,-0.55) node {$\Q$};
\draw[color=black] (-1.15,3.4) node {$\Gamma_\Q$};
\end{scriptsize}
\end{tikzpicture}
}
\subfigure[$\nu_0(F(b))=0$]{\label{v(a)=0}
\begin{tikzpicture}[line cap=round,line join=round,>=triangle 45,x=0.45cm,y=0.45cm]

\clip(-2.15,-1.5) rectangle (6.2,6.267584262100189);

\draw[->,color=black] (-1,0.) -- (6.15,0.);
\foreach \x in {.}
\draw[shift={(\x,0)},color=black] (0pt,2pt) -- (0pt,-2pt) node[below] {\footnotesize $\x$};

\draw[->,color=black] (0.,-1.5) -- (0.,3.1);
\foreach \y in {.}
\draw[shift={(0,\y)},color=black] (2pt,0pt) -- (-2pt,0pt) node[left] {\footnotesize $\y$};

\draw [line width=1.pt,color=blue] (0.,0.)-- (5.,0.);

\begin{scriptsize}
\draw [fill=blue] (0.,0.) circle (2pt);
\draw [fill=blue] (1.,0.) circle (2pt);
\draw [fill=blue] (5.,0.) circle (2pt);
\draw[color=black] (5.43,-0.55) node {$\Q$};
\draw[color=black] (-1.15,3.4) node {$\Gamma_\Q$};
\end{scriptsize}
\end{tikzpicture}
}
\subfigure[$\nu_0(F(b))<0$]{\label{v(a)<0}
\begin{tikzpicture}[line cap=round,line join=round,>=triangle 45,x=0.45cm,y=0.45cm]

\clip(-2.15,-1.5) rectangle (6.2,6.267584262100189);

\draw[->,color=black] (-1,0.) -- (6.15,0.);
\foreach \x in {.}
\draw[shift={(\x,0)},color=black] (0pt,2pt) -- (0pt,-2pt) node[below] {\footnotesize $\x$};

\draw[->,color=black] (0.,-1.5) -- (0.,3.1);
\foreach \y in {.}
\draw[shift={(0,\y)},color=black] (1pt,0pt) -- (-2pt,0pt) node[left] {\footnotesize $\y$};

\draw [line width=1.pt,color=blue] (0.,-1.)-- (5.,0.);
\begin{scriptsize}
\draw [fill=blue] (0.,-1) circle (2pt);
\draw [fill=blue] (1.,0.) circle (2pt);
\draw [fill=blue] (5.,0.) circle (2pt);
\draw[color=black] (5.43,-0.55) node {$\Q$};
\draw[color=black] (-1.15,3.4) node {$\Gamma_\Q$};
\end{scriptsize}
\end{tikzpicture}
}
\caption{Possible Newton polygons $\Delta_{x-b}(F)$ for $b\in K$}\label{PolNewtonFxpxa}
\end{figure}
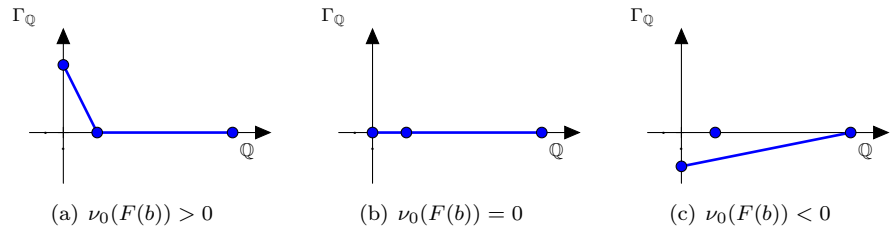

\begin{Obs}\label{obskuhlmanecs}
Suppose that $\nu_0(F(b))=0$ for some $b\in K$. Then for $c\in K$, we have $\nu_0(F(c))>0$ if and only if $(c-b)\nu_0$ is a root of $\overline F:=X^p-X+F(b)\nu_0\in K\nu_0[X]$. Since $\overline F$ is an Artin-Schreier polynomial, either it has a root in $K\nu_0$ or it is irreducible.
\end{Obs}

We now present a proof of Theorem \ref{kuhlmannrequs}
\begin{proof}[Proof of Theorem \ref{kuhlmannrequs}]
If there exists $b\in K$ such that $\nu_0(F(b))>0$, then by Proposition \ref{Corolsobreasc} there exist $p$ many distinct extensions of $\nu_0$ to $L$. The converse will follow from \textbf{(ii)} and \eqref{fundmeineqi}.

In order to prove \textbf{(ii)} assume that $\nu_0(F(b))\leq 0$ for every $b\in K$. Hence, $\Delta_{x-b}(F)$ has only one side with slope $-\frac{\nu_0(F(b))}{p}$. For any extension $\mu$ of $\nu_0$ to $L$, by Proposition \ref{Popdiscumath}, we have
\begin{equation}\label{equatnutulabaaixo}
\mu(x-b)=\frac{\nu_0(F(b))}{p}.
\end{equation}
In particular, if $\frac{\nu_0(F(b))}{p}\notin \Gamma_0$ for some $b\in K$, then $e=p$.

Assume that $S\subseteq \Gamma_0$ and that $S$ has a maximum. If there exists $b\in K$ such that $\nu_0(F(b))=0$, then by Remark \ref{obskuhlmanecs},
\[
X^p-X+F(b)\nu_0\in K\nu_0[X]
\]
is irreducible (and the residue of $x-b$ is a root of it). Hence $f=p$.

Take $b\in K$ and suppose that $\nu_0(F(b))<0$ and that $\displaystyle\frac{\nu_0(F(b))}{p}$ is a maximum of $S$. Take $c\in K$ such that $\nu_0(c)=\dfrac{\nu_0(F(b))}{p}$ (such $c$ exists because $S\subseteq \Gamma_0$). Then
\begin{displaymath}
\begin{array}{rcl}
\displaystyle\nu\left(\left(\frac{x-b}{c}\right)^p+\frac{F(b)}{c^p}\right)&=&\displaystyle\nu\left(\frac{x^p-b^p +b^p-b-a}{c^p}\right)=\nu\left(\frac{F(x)+x-b}{c^p}\right)\\[8pt]
&=&\nu(x-b)-p\nu_0(c)=(1-p)\nu_0(c)>0.
\end{array}
\end{displaymath}

In particular, if we denote by $\eta$ the residue of $\displaystyle\frac{x-b}{c}$, then $\eta$ is a root of
\[
\overline F_b=X^p+\frac{F(b)}{c^p}\nu_0\in K\nu_0[X].
\]
Since ${\rm char}(K\nu_0)=p$, we have that either $\eta\in K\nu_0$ or $\overline F_b$ is irreducible. By \eqref{fundmeineqi}, we conclude that if $\overline F_b$ is irreducible, then $f=p$ (and $g=e=1$). 
We will show that $\eta\notin K\nu_0$. Otherwise, there would exist $c'\in \VR_{\nu_0}$ such that
\[
0<\nu\left(\frac{x-b}{c}-c'\right)=\nu\left(\frac{x-b-cc'}{c}\right).
\]
In this case, by \eqref{equatnutulabaaixo} we have
\[
\dfrac{\nu_0(F(b+cc'))}{p}=\nu\left(x-b-cc'\right)>\nu_0(c)=\dfrac{\nu_0(F(b))}{p}.
\]
This is a contradiction to the maximality of $\dfrac{\nu_0(F(b))}{p}$ in $S$. Therefore, $f=p$.

Assume that $S\subseteq \Gamma_0$ and that $S$ does not have a maximum. Take any valuation $\mu$ (possibly different from $\nu$) extending $\nu_0$. By \eqref{equatnutulabaaixo} we have
\[
\nu(x-b)=\dfrac{\nu_0(F(b))}{p}=\mu(x-b)
\]
and since $\nu|_K=\nu_0=\mu|_K$ we obtain $\nu_{x-b}=\mu_{x-b}$. Moreover, for every polynomial $G(x)$ of degree smaller than $p$, by Proposition \ref{Cordahora} (here is the only instance where we use that the rank of $\nu_0$ is one), there exists $b\in K$ such that if $\nu(x-b')>\nu(x-b)$, then
\begin{equation}\label{eqbafquehju}
\nu(G)=\nu_{x-b'}(G)=\mu_{x-b'}(G)=\mu(G).
\end{equation}
Hence, $\nu$ is the unique valuation on $L$ extending $\nu_0$. It follows from \eqref{eqbafquehju}, the definition of $\nu_{x-b'}$ and the fact that $S\subseteq \Gamma_0$ that $e=1$.

It remains to show that $f=1$. Take any polynomial $G\in K[x]$ with $\deg(G)<p$ and take $b\in K$ such that
\[
\nu_{x-b}(G)=\nu(G).
\]
This implies that $\inv_{x-b}(x-b)\nmid \inv_{x-b}(G)$. Since $S$ is not bounded, there exists $b'\in K$ such that $\nu(x-b')>\nu(x-b)$. In particular, $\inv_{x-b}(x-b)\mid \inv_{x-b}(x-b')$. Altogether, we conclude that
\begin{equation}\label{equnrefeersobreinvs}
\inv_{x-b}(x-b')\nmid \inv_{x-b}(G).
\end{equation}
Let
\[
G=g_0+g_1(x-b')+\ldots+g_s(x-b')^s
\]
be the $x-b'$-expansion of $G$. By \eqref{equnrefeersobreinvs}, we obtain
\[
\nu(G)=\nu(g_0)<\nu\left(g_i(x-b')^i\right),\mbox{ for every }i, 1\leq i\leq s.
\]
This means that if $\nu(G)=0$, then the residues of $G$ and $g_0$ are the same. Therefore, $f=1$.
\end{proof}
\begin{Obs}
Proposition \ref{lemausadoduasvezes} and Theorem \ref{kuhlmannrequs} are also true for valuations with arbitrary rank. However, for their proofs, we use Theorem 1.1 of \cite{michael} whose statement is only true for rank  one valuations. Since the goal of this section is to illustrate the theory developed here, we will not state them in their whole generality. 
\end{Obs}
\begin{Obs}
The crucial step in the above proof used Proposition \ref{Cordahora}. One could use, alternatively, the theory of \textit{pseudo-convergent sequences} and Lemma 8 of \cite{Kapl} to deduce the same result.
\end{Obs}

\noindent{\footnotesize MATHEUS DOS SANTOS BARNAB\'E\\
Departamento de Matem\'atica--UFSCar\\
Rodovia Washington Lu\'is, 235\\
13565-905, S\~ao Carlos - SP, Brazil.\\
Email: {\tt matheusbarnabe@dm.ufscar.br} \\\\

\noindent{\footnotesize JOSNEI NOVACOSKI\\
Departamento de Matem\'atica--UFSCar\\
Rodovia Washington Lu\'is, 235\\
13565-905, S\~ao Carlos - SP, Brazil.\\
Email: {\tt josnei@ufscar.br}
\end{document}